\DeclareMathAlphabet{\mathpzc}{OT1}{pzc}{m}{it}
\newcounter{comptage}[part]
\newtheorem{lem}[comptage]{Lemma}
\newtheorem{theo}[comptage]{Theorem}
\newtheorem{defin}[comptage]{Definition}
\newtheorem{cor}[comptage]{Corollary}
\newtheorem{prop}[comptage]{Proposition}
\newtheorem{preremark}[comptage]{Remark}
\newenvironment{remark}{\begin{preremark}\rm}{\medskip \end{preremark}}
\numberwithin{equation}{section}
\newcommand{\Hh}{\mathcal H}
\newcommand{\R}{\mathbb R}
\newcommand{\N}{\mathbb N}
\newcommand{\diam}{{\rm Diam}}
\newcommand{\meanbar}[1]{%
\setbox0 = \hbox{$#1 \int$}
\hbox to 0pt{%
\thinspace
\hskip 0.1\wd0
\raise 0.5\ht0
\hbox{%
\lower 0.5\dp0
\hbox{\rule{0.8\wd0}{2\linethickness}}
}%
\hss
}%
}
\author{Antoine Lemenant, Emmanouil Milakis and Laura V. Spinolo}
\title[On the extension property of Reifenberg-flat domains]{On the extension property of Reifenberg-flat domains.}
\begin{document}

\begin{abstract} We provide a detailed proof of the fact that any domain which is sufficiently flat in the sense of Reifenberg is also Jones-flat, and hence it is an extension domain. We discuss various applications of this property, in particular we obtain $L^\infty$ estimates for the eigenfunctions of the Laplace operator with Neumann boundary conditions. We also compare different ways of measuring the ``distance" between two sufficiently close Reifenberg-flat domains. These results are pivotal to the quantitative stability analysis of the spectrum of the Neumann Laplacian performed in~\cite{lms2}.

\end{abstract}

\maketitle

{\bf AMS classification.} 49Q20, 49Q05, 46E35

{\bf Key words.} Reifenberg-flat domains, Extension domains

\section{Introduction}
The main goal of the present paper is establishing extension and geometric properties for a class of domains whose boundaries satisfy a fairly weak regularity requirement introduced by Reifenberg~\cite{r}. 
In particular, we show that any domain that is sufficiently flat in the sense of Reifenberg enjoys the so-called extension property and we discuss applications that are relevant for the analysis of PDEs defined in these domains. We also compare different ways of measuring the ``distance" between two sufficiently close Reifenberg-flat domains $X$ and $Y$, in particular we discuss the relations between the Hausdorff distances $d_H(X, Y)$, $d_H(\R^N \setminus X, \R^N \setminus Y)$ and $d_H(\partial X, \partial Y)$ and the measure of the symmetric difference $|X \triangle Y|$.

Although we are confident our results can find different applications, our original motivation was the quantitative stability analysis of the spectrum of the Laplace operator with Neumann boundary conditions defined in 
Reifenberg-flat domains, see~\cite{lms2}.  

The notion of Reifenberg-flat sets was first introduced in 1960 by Reifenberg~\cite{r} when he was working on the Plateau problem, and has since then played an important role in the study of minimal surfaces. More recently, the works by David \cite{d1,d2} about the regularity for 2-dimensional minimal sets in $\R^N$ rely on the Reifenberg parametrization and the specific $3$-dimensional results by David, De Pauw and Toro \cite{ddpt}. Also, Reifenberg-flat set are relevant in the study of the harmonic measure (see Kenig and Toro~\cite{hm3,hm2,hm1} and Toro~\cite{t,hm4}) and of the regularity for free boundary problems, like the minimization of the Mumford-Shah functional (see \cite{l1,l2}). Elliptic and parabolic equations defined in Reifenberg-flat domains have been recently investigated by Byun, Wang and Zhou~\cite{w1,w2,w3}, by Lemenant, Milakis and Spinolo and by Milakis and Toro~\cite{lm2,lm,lms2, mt}. Finally, we mention that Reifenberg-flat domains are in particular NTA domains in the sense of Jerison and Kenig 
\cite{jk}. 

We now provide the precise definition. We denote by $d_H$ the classical Hausdorff distance between two sets $X$ and $Y$,
\begin{equation}
    d_H( X , Y ) : = \max \big\{ \sup_{x \in  X } d(x, Y),
   \sup_{y \in Y} d(y, X)  \big\}.
\end{equation} 
\begin{defin}\label{defreif} Let $\varepsilon, r_0$ be two real numbers satisfying $0 < \varepsilon<1/2$ and $r_0 >0$. An $(\varepsilon,r_0)$-Reifenberg-flat domain $\Omega \subseteq \R^N$ is a nonempty open set satisfying the following two conditions:
\begin{itemize}
\item[$i)$] for every $x \in \partial \Omega$ and for every $r\leq r_0$,  there is a hyperplane $P(x,r)$ containing $x$ which satisfies
\begin{eqnarray}
\frac{1}{r}d_H\big( \partial \Omega \cap B(x,r), P(x,r)\cap B(x,r) \big) \leq \varepsilon. \label{reif}
\end{eqnarray}
\item[$ii)$]For every $x \in \partial \Omega$,  one of the connected component of  
$$B(x,r_0)\cap \big\{x : \;  dist(x,P(x,r_0))\geq 2\varepsilon r_0\big\}$$ 
is contained in   $\Omega$ and the other one is contained in $\R^N \setminus \Omega$.
\end{itemize}
\end{defin}
Condition $i)$ states that the boundary of $\Omega$ is an $(\varepsilon,r_0)$-Reifenberg-flat set. A Reifenberg-flat set enjoys local separability properties (see e.g. Theorem 4.1. in \cite{lihewang}), however we observe that condition $ii)$ in the definition  is not in general implied by condition $i)$, as the example of $\Omega = \R^N \setminus \partial B(0,1)$ shows (here $\partial B(0, 1)$ denotes the boundary of the unit ball). However, a consequence of the analysis in David~ \cite{d0} is that $i)$ implies $ii)$ under some further topological assumption, for instance the implication holds if $\Omega$ and $\partial \Omega$ are both connected. Note furthermore that a straightforward consequence of the definition is that, if $\varepsilon_1 < \varepsilon_2$, then any $(\varepsilon_1,r_0)$-Reifenberg-flat domain is also an $(\varepsilon_2,r_0)$-Reifenberg-flat domain. Finally, note that we only impose the separability requirement $ii)$  at scale $r_0$ but it simply follows from the definition that it also holds at any scale $r\leq r_0$ (see \cite[Proposition 2.2]{hm3} or Lemma~\ref{l:ii} below).

In \cite{r} Reifenberg proved the so-called topological disk theorem which states that, provided $\varepsilon$ is small enough,  any $(\varepsilon, r_0)$-Reifenberg-flat set in the unit $N$-ball is the bi-H\"olderian image of an $(N-1)$-dimensional disk. Also, any Lipschitz domain with sufficiently small Lipschitz constant is Reifenberg-flat for a suitable choice of the regularity parameter $\varepsilon$ (the choice depends on the Lipschitz constant). On the other hand, the ``flat'' Koch snowflake with sufficiently small angle is Reifenberg-flat (see Toro~\cite{t}) and hence it is an example of a Reifenberg-flat set which is \emph{not} Lipschitz, and with Hausdorff dimension greater than $N-1$.

The main goal of this paper is providing a complete and detailed proof of the fact that Reifenberg-flat domains are extension domains. This fact is relevant for the study of elliptic problems and was already known and used in the literature (see e.g. the introduction of \cite{w1}). However, to the best of our knowledge, an explicit proof was so far missing. We recall that the so called \textit{extension problem} can be formulated as follows: given an open set $\Omega$, we denote by $W^{1,p}$ the classical Sobolev space and we wonder whether or not one can define a bounded linear operator (the so-called extension operator) 
$$ 
    E:W^{1,p}(\Omega)\rightarrow W^{1,p}(\mathbb{R}^N)
$$  
such that $E(u) \equiv u$ on $\Omega$. If $\partial\Omega$ is Lipschitz, Calderon \cite{Calderon} established the existence of an extension operator in the case when $1<p<\infty$, while Stein \cite{Stein} considered the cases $p=1,\infty$. Jones \cite{J} proved the existence of extension operators for a new class of domains, the so-called $(\varepsilon,\delta)$-Jones flat domains (the precise definition is recalled in Section~\ref{topos}). In the present work we prove that sufficiently flat Reifenberg domains are indeed Jones flat domains. Our main result concerning the extension problem is as follows.

\begin{theo}\label{jonesrei} 
Any   $(1/600 , r_0)$-Reifenberg flat domain is a $(1/450 , r_0/7)$-Jones flat domain. 
\end{theo}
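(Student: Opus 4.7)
The plan is to verify the two Jones conditions (length bound and corkscrew distance-to-boundary bound) for an explicit curve connecting any pair $x,y\in\Omega$ with $|x-y|<r_0/7$. Set $\varepsilon=1/600$, so that by Reifenberg flatness at every boundary point and scale $\leq r_0$ the boundary is trapped in the $\varepsilon r$-neighborhood of an approximating hyperplane $P(x_0,r)$; I will use the fact (recalled in the excerpt and Lemma~\ref{l:ii}) that the two-sided separation holds at every such scale, so that we always know which connected component of the complement of the $2\varepsilon r$-neighborhood of $P(x_0,r)$ lies inside $\Omega$.

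Set $d_x=\dist(x,\partial\Omega)$, $d_y=\dist(y,\partial\Omega)$ and $s=|x-y|$. The construction splits into two cases. If both $d_x$ and $d_y$ are comparable to or larger than $s$ (a threshold like $d_x,d_y\geq 10\varepsilon s$ should suffice), then I simply take the segment $[x,y]$: length is exactly $s$, and for any $z\in[x,y]$ both $|x-z|$ and $|y-z|$ are at most $s$, while $\dist(z,\partial\Omega)\geq\min(d_x,d_y)-\text{tube deviation}$, which gives the required estimate with a constant much better than $1/450$. Otherwise, say $d_x<10\varepsilon s$, pick $x_0\in\partial\Omega$ with $|x-x_0|=d_x$, set $r=7s\leq r_0$, let $P_x=P(x_0,r)$ and let $\nu_x$ be the unit normal to $P_x$ pointing into $\Omega$ as selected by condition (ii). Define the lifted point
\[
x'=x+3\varepsilon r\,\nu_x,
\]
and proceed symmetrically with $y$, obtaining $y'$ (using the same hyperplane if $d_y$ is also small, or leaving $y'=y$ in the interior case).

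The curve is $\gamma=[x,x']\cup[x',y']\cup[y',y]$. I will check three things in order: (a) each segment lies in $\Omega$; (b) the length is controlled; (c) the corkscrew estimate holds. For (a), on $[x,x']$ one uses that the hyperplane $P_x$ separates $\Omega$ at scale $r=7s$ so moving $3\varepsilon r$ in the normal direction pushes a point that is at distance $\leq \varepsilon r$ from $P_x$ (via Reifenberg flatness applied to $x_0$ and the point on $\partial\Omega$ nearest to $x$) into the good side, well beyond the $2\varepsilon r$ tube; the analog holds on $[y',y]$, and $[x',y']$ stays in $\Omega$ because both endpoints lie at signed distance $\geq 2\varepsilon r$ inside, and the transverse Reifenberg oscillation of the boundary over the ball $B(x_0,r)\cup B(y_0,r)$ is bounded by $\varepsilon r$. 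For (b), each lift has length $3\varepsilon r=21\varepsilon s$, so $\ell(\gamma)\leq s+42\varepsilon s=(1+42\varepsilon)s$, which is $\leq s/\varepsilon'$ whenever $\varepsilon'\leq 1/(1+42\varepsilon)$; the choice $\varepsilon=1/600$ comfortably yields $\varepsilon'=1/450$.

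For (c), the only delicate region is near the endpoints. For $z=x+t\nu_x\in[x,x']$ with $t\in[0,3\varepsilon r]$, one has $|x-z|=t$, $|y-z|\geq s-t\geq s/2$ (since $t\leq 21\varepsilon s\ll s$), and $\dist(z,\partial\Omega)\geq t+d_x-\varepsilon r\geq t-\varepsilon r$; when $t\geq 2\varepsilon r$ this gives $\dist(z,\partial\Omega)\geq t/7\cdot\text{const}$, and the product $|x-z||y-z|/|x-y|\leq t$, so the ratio $\dist(z,\partial\Omega)\cdot s/(|x-z||y-z|)$ is bounded below by an absolute constant larger than $1/450$. When $t\leq 2\varepsilon r$ one instead uses that the distance from $z$ to the hyperplane $P_x$ plus the signed contribution from $d_x$ is at least of order $t$ itself, by projecting onto $\nu_x$. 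The segment $[x',y']$ is handled by the interior case. The main obstacle is the bookkeeping of constants in this last step: one must verify that the constants $3$ in the lift, $7$ in the scale ratio, and $\varepsilon=1/600$ are compatible and produce a Jones parameter at least $1/450$. Everything else reduces to repeated applications of the triangle inequality combined with the two defining conditions of Reifenberg flatness at the single scale $r=7|x-y|$.
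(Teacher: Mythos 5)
There is a genuine gap, and it sits exactly where the real difficulty of the theorem lies: the corkscrew estimate near an endpoint $x$ whose distance to the boundary is much smaller than $s=d(x,y)$. Your curve lifts $x$ by a \emph{single} segment $[x,x']$ of height $3\varepsilon r$ in the direction $\nu_x$ normal to the one hyperplane $P(x_0,7s)$. For $z=x+t\nu_x$ with $d(x,\Omega^c)\ll t\leq \varepsilon r$, the Jones condition forces $d(z,\Omega^c)\geq \delta\, d(z,x)\approx \delta t$, but flatness at the single scale $7s$ only confines $\partial\Omega$ to a slab of half-width $\varepsilon r=7\varepsilon s$ around $P(x_0,7s)$, and $t$ lies \emph{below} that width: the boundary may come arbitrarily close to $z$ compared with $t$. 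Your inequality ``$d(z,\partial\Omega)\geq t+d_x-\varepsilon r$'' is unjustified (one cannot add $t$ to $d_x$ without knowing the boundary stays a graph below $x$; the best one gets from the slab picture is roughly $d(z,P)-\varepsilon r$, which is negative in this range), and the proposed fix for $t\leq 2\varepsilon r$ by ``projecting onto $\nu_x$'' fails for the same reason: distance to $P(x_0,7s)$ does not bound distance to $\partial\Omega$ from below at heights under $\varepsilon r$, and when $d(x,\Omega^c)$ is exponentially small compared with $s$ the approximating planes at scale $\sim t$ near $x$ may be tilted by an angle of order $1$ relative to $\nu_x$ (the tilt drifts by $O(\varepsilon)$ per dyadic scale, cf.\ Lemma~\ref{angleLemma}), so moving along $\nu_x$ need not recede from $\partial\Omega$ at a linear rate at those intermediate scales. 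This is precisely why the paper's proof does not use one lift but the dyadic chain of points $Y(x_0,2^{-k}r)$, $k=0,\dots,k_0$, with $2^{-k_0}r\simeq d(x,\Omega^c)$: at each scale it invokes flatness and the separation property (Lemma~\ref{l:ii}) in $B(x_0,2^{-k+1}r)$, and controls the change of normals between consecutive scales via Lemma~\ref{angleLemma}, which is what yields $d(z,\Omega^c)\geq \tfrac{29}{240}d(z,x)$ along the whole path. Without some multiscale device of this kind the statement cannot be reached from flatness at the single scale $7s$.

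A secondary but also real problem is your interior case: the threshold $d_x,d_y\geq 10\varepsilon s=s/60$ does not guarantee $[x,y]\subseteq\Omega$, let alone the corkscrew bound with a constant ``much better than $1/450$'': with $d_x=d_y=s/60$ the segment between $x$ and $y$ can cross a boundary bump of height $\varepsilon\cdot 7s$ lying between them. The straight segment is only admissible when $d(x,\Omega^c)$ (or $d(y,\Omega^c)$) is comparable to $d(x,y)$; the paper uses $d(x,\Omega^c)\geq 2\,d(x,y)$, so that $[x,y]\subseteq B(x,d(x,y))\subseteq\Omega$ and $d(z,\Omega^c)\geq d(x,\Omega^c)-d(z,x)\geq d(x,y)$, and then the remaining configurations are routed through the corkscrew paths and the connecting segment $[Y(x_0,r),Y(y_0,r)]$ inside $B^+(x_0,7r)$. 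The length bookkeeping in your note is fine, but it is not the binding constraint; the missing ingredient is the multiscale corkscrew construction.
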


As direct consequence of Theorem~\ref{jonesrei} we get that one can define extension operators for $(1/600 , r_0)$-Reifenberg flat domains (see Corollary \ref{ext} for a precise statement). Some relevant features of this result are the following: first, we provide an explicit and universal threshold 
on the coefficient $\varepsilon$ for the extension property to hold (namely, $\varepsilon \leq 1/600$). Second, $1/600$ is fairly big compared to the usual threshold needed to apply Reifenberg's topological disk theorem (for e.g. the threshold is $10^{-15}$ in \cite{ddpt}, see also \cite{lihewang2} for an interesting alternative proof). 

As a consequence of the extension extension property, we obtain that the classical  Rellich-Kondrachov Theorem applies to Reifenberg-flat domains (see Proposition~\ref{embedding}), that the Neumann Laplacian has a discrete spectrum and that the eigenfunctions are bounded (see Proposition~\ref{uestim}).  Also, by combining Theorem~\ref{jonesrei} with the works by Chua \cite{Chua-indiana,Chua-illinois,Chua-Canad} and Christ \cite{Christ} we get that one can define extension operators for weighted Sobolev spaces and Sobolev spaces of fractional order (see Remark~\ref{chuachrist} in the present paper).

We conclude the paper by establishing results unrelated to the extension problem, namely we study the relation between different ways of measuring the ``distance" between sets of $\R^N$. In particular, for two general open sets $X$ and $Y$, neither the Hausdorff distance $d_{H}(X,Y)$ nor the Hausdorff distance between the complements $d_H(\R^N \setminus X, \R^N \setminus Y)$ is, in general, controlled by the Lebesgue measure of the symmetric difference $|X \triangle Y|$. However, we prove that they are indeed controlled provided that $X$, $Y$ are Reifenberg flat and close enough, in a suitable sense. This result will be as well applied in~\cite{lms2} to the stability analysis of the spectrum of the Laplace operator with Neumann boundary conditions. 

The paper is organized as follows: in Section~\ref{topos} we prove that sufficiently flat Reifenberg domains are Jones-flat, in Section~\ref{appl} we show that if these domains are also connected, then they enjoy the extension property. In Section~\ref{appl} we also discuss some applications of the extension property. In Section~\ref{connected} we investigate how to handle domains that are not connected and finally in Section~\ref{s:hausdorff} we investigate the relation between different ways of measuring the ``distance" between Reifenberg-flat domains. 
\subsection{Notations}
We denote by $C(a_1, \dots, a_h)$ a constant only depending on the variables $a_1, \dots, a_h$. Its precise value can vary from line to line. 
Also, we use the following notations:  \\
$\Hh^N$ : the $N$-dimensional Hausdorff measure.\\
$\omega_N$ : the Lebesgue measure of the unit ball in $\R^N$.\\
$|A|$ : the Lebesgue measure of the Borel set $A  \subseteq \R^N$. \\
$A^c$: the complement of the set $A$, $A^c : = \R^N \setminus A.$ \\
$\bar A$: the closure of the set $A$. \\
$W^{1,p}(\Omega)$ : the Sobolev space of $L^p$ functions whose derivatives are in $L^p$.\\
$\langle x, y \rangle$: the standard scalar product between the vectors $x, y \in \R^N$. \\
$|x|$: the norm of the vector $x \in \R^N$. \\
$d(x, y)$: the distance from the point $x$ to the point $y$, $d(x, y) = |x -y|$. \\
$d(x, A)$: the distance from the point $x$ to the set $A$. \\
$d_H(A,B)$ : the Hausdorff distance from the set $A$ to the set $B$.\\
$[x, y]$: the segment joining the points $x, y \in \R^N$. \\
$B(x, r)$: the open ball of radius $r$ centered at $x$.  \\
$\overline{B}(x, r)$: the closed ball of radius $r$ centered at $x$. 

\section{Reifenberg-flat and Jones domains}
\label{topos}
In this section we show that any sufficiently flat Reifenberg domain is Jones-flat, in the sense of~\cite{J}. The extension property follows then as a corollary of the analysis in~\cite{J}.

First, we provide the precise definition of Jones-flatness. 
\begin{defin} An open and bounded set $\Omega$ is a $(\delta, R_0)$-Jones-flat domain if for any $x,y \in \Omega$ such that $d(x,y)\leq R_0$ there is a rectifiable curve $\gamma$ which connects $x$ and $y$ and satisfies 
\begin{equation}
\label{e:lenght}
{\Hh^1(\gamma)\leq \delta^{-1}d(x,y)}
\end{equation} 
and
\begin{eqnarray}
d(z,\Omega^c)\geq \delta \, \frac{d(z,x)d(z,y)}{d(x,y)},\quad \text{ for all } z\in \gamma. \label{jonesprop3}
\end{eqnarray}
\end{defin}
To investigate the relation between Jones flatness and Reifenberg flatness we need two preliminary lemmas. 
\begin{lem} \label{angleLemma}Let $\Omega \subseteq \R^N$ be an $(\varepsilon, r_0)-$Reifenberg flat domain. Given $x \in \partial \Omega$ and $r \leq r_0$, we term $\nu_r$ the unit normal vector to the hyperplane $P(x,r)$ provided by the definition of Reifenberg-flatness. Given $M \ge 1$,  for every $r\leq r_0/M$ we have
\begin{equation}
\label{e:normal}
 |\langle \nu_r, \nu_{M r}\rangle| \geq 1 - (M+1)\varepsilon.
 \end{equation}
\end{lem}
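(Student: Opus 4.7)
The plan is to compare $\nu_r$ and $\nu_{Mr}$ by bounding the component of $\nu_{Mr}$ that lies inside the hyperplane $P(x,r)$. The idea is that both hyperplanes pass through the common point $x$ and both approximate $\partial \Omega$ on comparable scales, so any point sitting on $P(x,r) \cap B(x,r)$ must also be close to $P(x,Mr)$ via a short detour through $\partial \Omega$.

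Concretely, I would first write $\nu_{Mr} = \alpha \nu_r + \beta \tau$, where $\tau \in P(x,r)$ is a unit vector orthogonal to $\nu_r$, $\beta \ge 0$, and $\alpha^2 + \beta^2 = 1$. Since $|\langle \nu_r, \nu_{Mr}\rangle| = |\alpha|$, it suffices to prove $\beta \le (M+1)\varepsilon$. For any $\rho < r$, set $p := x + \rho \tau$, which lies in $P(x,r) \cap B(x,r)$. Condition $i)$ of Reifenberg-flatness at scale $r$ produces $q \in \partial\Omega \cap B(x,r)$ with $|p - q| \le \varepsilon r$; the same $q$ then lies in $\partial\Omega \cap B(x, Mr)$, so condition $i)$ at scale $Mr$ produces $s \in P(x, Mr) \cap B(x, Mr)$ with $|q - s| \le \varepsilon M r$. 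Two applications of the triangle inequality yield
\[
d(p, P(x, Mr)) \le |p - q| + |q - s| \le (M+1)\varepsilon r.
\]
On the other hand, since $P(x, Mr)$ is the affine hyperplane through $x$ with unit normal $\nu_{Mr}$, this same distance equals $|\langle p - x, \nu_{Mr}\rangle| = \rho \beta$. Sending $\rho \to r^-$ gives $\beta \le (M+1)\varepsilon$, and the elementary inequality $\sqrt{1-t^2} \ge 1 - t$ for $t \in [0,1]$ (applied with $t = \beta$) then yields $|\alpha| \ge 1 - (M+1)\varepsilon$ whenever $(M+1)\varepsilon \le 1$; the statement is otherwise vacuous.

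I do not expect any real obstacle here: the argument is essentially two triangle inequalities combined with the projection formula for the distance to an affine hyperplane. The only minor issue is that the Reifenberg condition is phrased on open balls, so one cannot directly take $|p - x| = r$; this is handled by the harmless limit $\rho \to r^-$. It is also worth noting that the definition of Reifenberg-flatness does not fix an orientation for the unit normals $\nu_r$ and $\nu_{Mr}$, which is precisely why the statement is phrased in terms of $|\langle \nu_r, \nu_{Mr}\rangle|$ rather than $\langle \nu_r, \nu_{Mr}\rangle$ itself.
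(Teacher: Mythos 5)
Your proof is correct and follows essentially the same route as the paper: both compare $P(x,r)$ and $P(x,Mr)$ by chaining the Reifenberg condition at scales $r$ and $Mr$ through $\partial\Omega$, and then turn the resulting $(M+1)\varepsilon r$ proximity of the two hyperplanes into the estimate on the normals. The remaining differences are cosmetic -- the paper bounds the Hausdorff distance between the two plane slices, rescales to the unit ball and projects $\nu_{Mr}$ onto $P(x,r)$, whereas you test the single point $x+\rho\tau$, bound the tangential component $\beta$ and use $\sqrt{1-\beta^2}\ge 1-\beta$; your one-sided chaining (small ball into large ball) even avoids the minor localization issue in restricting the Hausdorff bound at scale $Mr$ to the smaller ball.
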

\begin{proof} We assume with no loss of generality that $x$ is the origin. For simplicity, in the proof we denote by $B_r$ the ball $B(0,r)$ and by $P_r$ the hyperplane $P(0,r)$. From the definition of Reifenberg flatness we infer that
\begin{eqnarray}
d_H(P_{M r}\cap B_r , P_r \cap B_r ) & \leq & d_H(P_{M r}\cap B_r , \partial \Omega \cap B_r ) +  d_H(\partial \Omega \cap B_r , P_r \cap B_r ) \notag \\
&\leq & M r \varepsilon + r\varepsilon \leq (M+1)r \varepsilon. \notag
\end{eqnarray}
Since $P_{Mr}$ and $P_r$ are linear spaces we deduce that
\begin{eqnarray}
d_H(P_{M r}\cap B_1 , P_r \cap B_1 )\leq (M+1)\varepsilon. \label{distP}
\end{eqnarray}
We term $\pi_r$ and $\pi_{Mr}$ the orthogonal  projections onto $P_r$ and $P_{Mr}$, respectively, and we fix an arbitrary point $y \in P_r\cap B_1$. Inequality~\eqref{distP} states that there is $z\in \bar P_{Mr}\cap \bar B_1$ satisfying 
$$ d(z, y)\leq (M+1)\varepsilon.$$
In particular, since $1= |\nu_{Mr} |=\inf_{z \in P_{Mr}} d( \nu_{Mr}, z)$, we get
$$d(\nu_{Mr}, y ) \geq d(\nu_{Mr}, z )- d(z, y ) \geq 1-(M+1)\varepsilon.$$
By taking the infimum for $y \in P_r\cap B_1$ we obtain
$$ |\nu_{Mr}- \pi_{r}(\nu_{Mr}) |\geq  1-(M+1)\varepsilon,$$
and the proof is concluded by recalling that $ |\langle \nu_{M r}, \nu_r \rangle| = d(\nu_{Mr}, \pi_{r}(\nu_{Mr}) )$.
\end{proof}
The following lemma discuss an observation due to Kenig and Toro~\cite[Proposition 2.2]{hm3}. Note that the difference between Lemma~\ref{l:ii} and part $ii)$ in the definition of Reifenberg flatness is that in $ii)$ we only require the separation property at scale $r_0$.
\begin{lem}
\label{l:ii} Let $\Omega \subseteq \R^N$ be an $(\varepsilon, r_0)-$Reifenberg flat domain. For every $x \in \partial \Omega$ and $r \in ]0, r_0]$, one of the connected components of  
$$B(x,r)\cap \big\{x : \;  dist(x,P(x,r))\geq 2\varepsilon r \big\}$$ 
is contained in  $\Omega$ and the other one is contained in $\R^N \setminus \Omega$. Here $P(x, r)$ is the same hyperplane as in part $i)$ of the definition of Reifenberg-flatness.
\end{lem}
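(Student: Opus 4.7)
Fix $x \in \partial\Omega$ and $r \in (0, r_0]$, and pick any unit vector $\nu_r$ normal to $P(x,r)$. The set appearing in the statement splits as $D_r^+(\nu_r) \sqcup D_r^-(\nu_r)$, where
\[
 D_r^{\pm}(\nu_r) := \{ y \in B(x, r) : \pm \langle y-x, \nu_r\rangle \geq 2 \varepsilon r \}.
\]
Each $D_r^{\pm}(\nu_r)$ is the intersection of an open ball with a closed half-space, hence convex and in particular connected. Condition $i)$ applied at scale $r$ yields $\partial\Omega \cap B(x,r) \subseteq \{y : \dist(y, P(x, r)) \leq \varepsilon r\}$, so neither cap meets $\partial\Omega$. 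Being connected and disjoint from $\partial\Omega$, each cap is entirely contained either in $\Omega$ or in $\R^N \setminus \Omega$. The whole content of the lemma is therefore the assertion that the two caps lie on \emph{opposite} sides of $\partial\Omega$.

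\textbf{Dyadic iteration.} The plan is to propagate the separation from scale $r_0$, where it holds by assumption $ii)$, down to scale $r$ along a dyadic chain $r_0 \geq r_0/2 \geq r_0/4 \geq \cdots$ by means of the following inductive step: if $R \in (0, r_0]$ is a scale at which the two caps are on opposite sides, say $D_R^+(\nu_R) \subseteq \Omega$, then for every $r' \in [R/2, R]$ the two caps at scale $r'$ are also on opposite sides. To verify this, apply Lemma~\ref{angleLemma} with $M = R/r' \in [1,2]$ to get $|\langle \nu_{r'}, \nu_R\rangle| \geq 1 - 3\varepsilon$. After flipping $\nu_{r'}$ if needed, one may assume $\langle \nu_{r'}, \nu_R\rangle \geq 1 - 3\varepsilon$, and considers the test point $y_+ := x + (r'/2)\nu_{r'}$, which lies in $D_{r'}^+(\nu_{r'})$ as soon as $\varepsilon \leq 1/4$. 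The key inequality is
\[
\langle y_+ - x, \nu_R\rangle = (r'/2)\langle \nu_{r'}, \nu_R\rangle \geq (r'/2)(1 - 3\varepsilon) \geq (R/4)(1 - 3\varepsilon),
\]
and this exceeds $2\varepsilon R$ provided $\varepsilon$ is small enough (for instance $\varepsilon \leq 1/11$); combined with $|y_+-x|=r'/2<R$, this gives $y_+ \in D_R^+(\nu_R) \subseteq \Omega$. Since $D_{r'}^+(\nu_{r'})$ is connected, disjoint from $\partial\Omega$, and now contains a point of $\Omega$, it lies entirely in $\Omega$. A symmetric argument with $y_- := x - (r'/2)\nu_{r'}$ places $D_{r'}^-(\nu_{r'})$ in $\R^N\setminus\Omega$.

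\textbf{Conclusion and main obstacle.} Iterating the inductive step along the chain $r_0 \to r_0/2 \to r_0/4 \to \cdots$ down to the first scale $R_k \in [r, 2r)$, and then one more time from $R_k$ to $r$, yields the desired separation at scale $r$. The main technical obstacle is precisely the smallness threshold on $\varepsilon$ forced by the key inequality above. A direct comparison between $\nu_r$ and $\nu_{r_0}$ is not available, since Lemma~\ref{angleLemma} only provides $|\langle \nu_r, \nu_{r_0}\rangle| \geq 1 - (r_0/r+1)\varepsilon$, a bound that becomes vacuous as $r \to 0$; keeping $M \leq 2$ along a dyadic chain is exactly the device that bypasses this degradation.
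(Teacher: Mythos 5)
Your proof is correct and follows essentially the same route as the paper's: both propagate the separation property downward in scale by an inductive step in which Lemma~\ref{angleLemma} controls the tilt between the normals at two comparable scales, a test point on the normal axis transfers the sign information, and the caps, being connected and disjoint from $\partial\Omega$ by condition $i)$, inherit it entirely. The only difference is parametric: you fix a dyadic ratio $M\in[1,2]$, which forces the explicit threshold $\varepsilon\leq 1/11$, whereas the paper lets the per-step ratio be as large as $M\leq(1-\varepsilon)/(3\varepsilon)$ (hence an implicit restriction $\varepsilon<1/4$); in the regime $\varepsilon\leq 1/600$ where the lemma is actually used, this makes no difference.
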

\begin{proof}
We fix $\rho \in ]0, r_0]$ and we assume that the separation property holds at scale $\rho$, namely that  one of the connected components of  
$$B(x, \rho)\cap \big\{x : \;  dist(x,P(x,\rho))\geq 2\varepsilon \rho \big\}$$ 
is contained in  $\Omega$ and the other one is contained in $\R^N \setminus \Omega$. We now show that the same separation property holds at scale $r$ for every $r \in ]\rho/M, \rho]$ provided that $M \leq (1- \varepsilon) / 3 \varepsilon.$ By iteration this implies that the separation property holds at any scale $r \in ]0, r_0].$

Let us fix $r \in ]\rho/M, \rho]$ and denote by $B^+ (x, r) $ one of the connected components of 
$$
  B(x, r)\cap \big\{x : \;  dist(x,P(x,r)) \geq 2\varepsilon r \big\}$$
and by $B^-(x, r)$ the other one.  Also, we term $Y^+$ and $Y^-$ the points of intersection of the line passing through $x$ and perpendicular to 
$P(x, r)$ with the boundary of the ball $B(x, r)$. 

By recalling~\eqref{e:normal} and the inequality $r \ge \rho / M$, we get that the distance of $Y^{\pm}$ from the hyperpane $P(x, \rho)$ satisfies the following inequality:
$$
    d( Y^{\pm}, P(x, \rho)) \ge  r  
    | \langle \nu_\rho, \nu_{\rho/M} \rangle| \geq r \big[ 1 -  
    (M+1)\varepsilon \big] \ge \rho \frac{ 1 -  
    (M+1)\varepsilon }{M}. 
$$
Since by assumption $M \leq (1-   \varepsilon) / 3 \varepsilon$, this implies that 
$d( Y^{\pm}, P(x, \rho)) \ge  2 \varepsilon \rho$ and hence that one among $Y^+$ and $Y^-$ belongs to $B^+(x, \rho)$ and the other one to $B^-(x, \rho)$. Since by assumption the separation property holds at scale $\rho$, this implies that one of them belongs to $\Omega$ and the other one to $\Omega^c$.

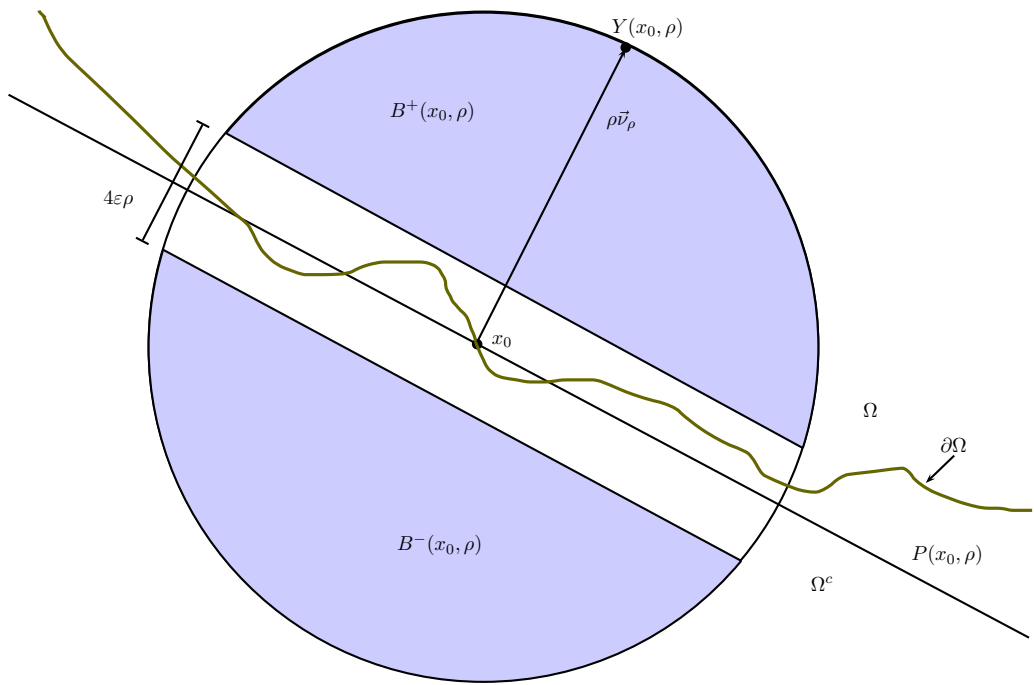
\begin{figure}[h]
\begin{center}
\begin{center}
\scalebox{0.7} 
{
\begin{pspicture}(0,-6.4)(19.47,6.41)
\definecolor{color613b}{rgb}{0.8,0.8,1.0}
\definecolor{color554}{rgb}{0.4,0.4,0.0}
\pscircle[linewidth=0.04,dimen=outer](9.02,0.0){6.38}
\psline[linewidth=0.04cm](0.0,4.78)(19.38,-5.54)
\psdots[dotsize=0.2](8.9,0.04)
\usefont{T1}{ptm}{m}{n}
\rput(9.37,0.105){$x_0$}
\usefont{T1}{ptm}{m}{n}
\rput(17.83,-3.975){$P(x_0,\rho)$}
\psarc[linewidth=0.04,fillstyle=solid,fillcolor=color613b](9.02,-0.02){6.36}{342.47443}{140.24182}
\psline[linewidth=0.04](15.084785,-1.9351954)(4.130747,4.04753)
\psarc[linewidth=0.04,fillstyle=solid,fillcolor=color613b](9.02,-0.02){6.36}{163.05739}{320.33215}
\psline[linewidth=0.04](2.9360423,1.8333911)(13.915661,-4.0798163)
\pscustom[linewidth=0.06,linecolor=color554]
{
\newpath
\moveto(0.66,6.28)
\lineto(0.75,6.08)
\curveto(0.795,5.98)(0.89,5.805)(0.94,5.73)
\curveto(0.99,5.655)(1.085,5.535)(1.13,5.49)
\curveto(1.175,5.445)(1.44,5.195)(1.66,4.99)
\curveto(1.88,4.785)(2.335,4.345)(2.57,4.11)
\curveto(2.805,3.875)(3.08,3.605)(3.12,3.57)
\curveto(3.16,3.535)(3.28,3.43)(3.36,3.36)
\curveto(3.44,3.29)(3.605,3.145)(3.69,3.07)
\curveto(3.775,2.995)(3.915,2.87)(3.97,2.82)
\curveto(4.025,2.77)(4.145,2.66)(4.21,2.6)
\curveto(4.275,2.54)(4.38,2.45)(4.42,2.42)
\curveto(4.46,2.39)(4.525,2.33)(4.55,2.3)
\curveto(4.575,2.27)(4.64,2.15)(4.68,2.06)
\curveto(4.72,1.97)(4.795,1.84)(4.83,1.8)
\curveto(4.865,1.76)(4.95,1.675)(5.0,1.63)
\curveto(5.05,1.585)(5.165,1.515)(5.23,1.49)
\curveto(5.295,1.465)(5.415,1.42)(5.47,1.4)
\curveto(5.525,1.38)(5.605,1.36)(5.63,1.36)
\curveto(5.655,1.36)(5.73,1.36)(5.78,1.36)
\curveto(5.83,1.36)(5.94,1.36)(6.0,1.36)
\curveto(6.06,1.36)(6.205,1.365)(6.29,1.37)
\curveto(6.375,1.375)(6.495,1.4)(6.53,1.42)
\curveto(6.565,1.44)(6.635,1.475)(6.67,1.49)
\curveto(6.705,1.505)(6.775,1.53)(6.81,1.54)
\curveto(6.845,1.55)(6.905,1.565)(6.93,1.57)
\curveto(6.955,1.575)(7.005,1.585)(7.03,1.59)
\curveto(7.055,1.595)(7.115,1.6)(7.15,1.6)
\curveto(7.185,1.6)(7.25,1.6)(7.28,1.6)
\curveto(7.31,1.6)(7.37,1.6)(7.4,1.6)
\curveto(7.43,1.6)(7.495,1.6)(7.53,1.6)
\curveto(7.565,1.6)(7.64,1.6)(7.68,1.6)
\curveto(7.72,1.6)(7.785,1.6)(7.81,1.6)
\curveto(7.835,1.6)(7.89,1.595)(7.92,1.59)
\curveto(7.95,1.585)(8.025,1.56)(8.07,1.54)
\curveto(8.115,1.52)(8.175,1.47)(8.19,1.44)
\curveto(8.205,1.41)(8.24,1.35)(8.26,1.32)
\curveto(8.28,1.29)(8.3,1.235)(8.3,1.21)
\curveto(8.3,1.185)(8.305,1.135)(8.31,1.11)
\curveto(8.315,1.085)(8.34,1.03)(8.36,1.0)
\curveto(8.38,0.97)(8.405,0.92)(8.41,0.9)
\curveto(8.415,0.88)(8.44,0.84)(8.46,0.82)
\curveto(8.48,0.8)(8.52,0.755)(8.54,0.73)
\curveto(8.56,0.705)(8.605,0.66)(8.63,0.64)
\curveto(8.655,0.62)(8.695,0.57)(8.71,0.54)
\curveto(8.725,0.51)(8.75,0.445)(8.76,0.41)
\curveto(8.77,0.375)(8.79,0.32)(8.8,0.3)
\curveto(8.81,0.28)(8.83,0.235)(8.84,0.21)
\curveto(8.85,0.185)(8.865,0.13)(8.87,0.1)
\curveto(8.875,0.07)(8.89,0.02)(8.9,0.0)
\curveto(8.91,-0.02)(8.93,-0.065)(8.94,-0.09)
\curveto(8.95,-0.115)(8.975,-0.17)(8.99,-0.2)
\curveto(9.005,-0.23)(9.035,-0.295)(9.05,-0.33)
\curveto(9.065,-0.365)(9.105,-0.425)(9.13,-0.45)
\curveto(9.155,-0.475)(9.21,-0.52)(9.24,-0.54)
\curveto(9.27,-0.56)(9.35,-0.59)(9.4,-0.6)
\curveto(9.45,-0.61)(9.54,-0.625)(9.58,-0.63)
\curveto(9.62,-0.635)(9.705,-0.65)(9.75,-0.66)
\curveto(9.795,-0.67)(9.875,-0.68)(9.91,-0.68)
\curveto(9.945,-0.68)(10.015,-0.68)(10.05,-0.68)
\curveto(10.085,-0.68)(10.15,-0.68)(10.18,-0.68)
\curveto(10.21,-0.68)(10.28,-0.675)(10.32,-0.67)
\curveto(10.36,-0.665)(10.425,-0.655)(10.45,-0.65)
\curveto(10.475,-0.645)(10.565,-0.64)(10.63,-0.64)
\curveto(10.695,-0.64)(10.825,-0.64)(10.89,-0.64)
\curveto(10.955,-0.64)(11.055,-0.64)(11.09,-0.64)
\curveto(11.125,-0.64)(11.205,-0.655)(11.25,-0.67)
\curveto(11.295,-0.685)(11.5,-0.77)(11.66,-0.84)
\curveto(11.82,-0.91)(12.145,-1.015)(12.31,-1.05)
\curveto(12.475,-1.085)(12.69,-1.18)(12.74,-1.24)
\curveto(12.79,-1.3)(13.01,-1.475)(13.18,-1.59)
\curveto(13.35,-1.705)(13.635,-1.86)(13.75,-1.9)
\curveto(13.865,-1.94)(14.015,-2.01)(14.05,-2.04)
\curveto(14.085,-2.07)(14.15,-2.155)(14.18,-2.21)
\curveto(14.21,-2.265)(14.26,-2.35)(14.28,-2.38)
\curveto(14.3,-2.41)(14.34,-2.45)(14.36,-2.46)
\curveto(14.38,-2.47)(14.475,-2.515)(14.55,-2.55)
\curveto(14.625,-2.585)(14.825,-2.665)(14.95,-2.71)
\curveto(15.075,-2.755)(15.28,-2.785)(15.36,-2.77)
\curveto(15.44,-2.755)(15.59,-2.67)(15.66,-2.6)
\curveto(15.73,-2.53)(15.87,-2.45)(15.94,-2.44)
\curveto(16.01,-2.43)(16.175,-2.41)(16.27,-2.4)
\curveto(16.365,-2.39)(16.565,-2.365)(16.67,-2.35)
\curveto(16.775,-2.335)(16.93,-2.32)(16.98,-2.32)
\curveto(17.03,-2.32)(17.125,-2.4)(17.17,-2.48)
\curveto(17.215,-2.56)(17.425,-2.71)(17.59,-2.78)
\curveto(17.755,-2.85)(18.02,-2.95)(18.12,-2.98)
\curveto(18.22,-3.01)(18.375,-3.05)(18.43,-3.06)
\curveto(18.485,-3.07)(18.575,-3.08)(18.61,-3.08)
\curveto(18.645,-3.08)(18.73,-3.08)(18.78,-3.08)
\curveto(18.83,-3.08)(18.915,-3.09)(18.95,-3.1)
\curveto(18.985,-3.11)(19.045,-3.12)(19.07,-3.12)
\curveto(19.095,-3.12)(19.145,-3.12)(19.17,-3.12)
\curveto(19.195,-3.12)(19.245,-3.12)(19.27,-3.12)
\curveto(19.295,-3.12)(19.345,-3.12)(19.37,-3.12)
\curveto(19.395,-3.12)(19.425,-3.12)(19.44,-3.12)
}
\pscustom[linewidth=0.06,linecolor=color554]
{
\newpath
\moveto(0.7,6.24)
\lineto(0.67,6.27)
\curveto(0.655,6.285)(0.625,6.315)(0.61,6.33)
\curveto(0.595,6.345)(0.58,6.365)(0.58,6.38)
}
\usefont{T1}{ptm}{m}{n}
\rput(17.95,-1.875){$\partial \Omega$}
\psline[linewidth=0.04cm,arrowsize=0.05291667cm 2.0,arrowlength=1.4,arrowinset=0.4]{<-}(17.42,-2.6)(17.96,-2.08)
\usefont{T1}{ptm}{m}{n}
\rput(16.37,-1.215){$\Omega$}
\usefont{T1}{ptm}{m}{n}
\rput(15.45,-4.515){$\Omega^c$}
\psline[linewidth=0.04cm,tbarsize=0.07055555cm 5.0]{|*-|}(2.54,2.0)(3.7,4.24)
\psline[linewidth=0.04cm,arrowsize=0.05291667cm 2.0,arrowlength=1.4,arrowinset=0.4]{<-}(11.72,5.66)(8.9,0.08)
\psdots[dotsize=0.2](11.72,5.68)
\usefont{T1}{ptm}{m}{n}
\rput(12.14,6.045){$Y(x_0,\rho)$}
\usefont{T1}{ptm}{m}{n}
\rput(2.09,2.785){$4\varepsilon \rho$}
\usefont{T1}{ptm}{m}{n}
\rput(11.64,4.285){$\rho \vec \nu_\rho$}
\usefont{T1}{ptm}{m}{n}
\rput(8.05,4.465){$B^+(x_0,\rho)$}
\usefont{T1}{ptm}{m}{n}
\rput(8.19,-3.775){$B^-(x_0,\rho)$}
\end{pspicture} 
}
\end{center}
\end{center}
\caption{notations for the proof of Theorem \ref{jonesrei}}
\label{fig1}
\end{figure}

To conclude, note that part $i)$ in the definition of Reifenberg flatness implies that 
$$
    B^{\pm} (x, r) \cap \partial \Omega = \emptyset 
$$ 
and hence both $B^+(x, r)$ and $B^-(x, r)$ are entirely contained in either $\Omega$ or $\Omega^c$. By recalling that one among $Y^+$ and $Y^-$ belongs to $\Omega$ and the other one to $\Omega^c$, we conclude 
the proof of the lemma. 
\end{proof}
We are now ready to establish the main result of this section, namely Theorem \ref{jonesrei}.

\begin{proof}[Proof of Theorem \ref{jonesrei}]
We assume $\varepsilon \leq 1/600$, we fix an $(\varepsilon, r_0)$-Reifenberg flat domain $\Omega \subseteq \R^N$  and we proceed according to the following steps. \\
{\sc $\diamond$ Step 1.} We first introduce some notations (see Figure~\ref{fig1} for a representation).


For any $x_0 \in \partial \Omega$ and $\rho \leq r_0$, we denote as usual by $P(x_0,\rho)$ the hyperplane provided by the definition of Reifenberg flatness, and by $\vec \nu_\rho$ its normal. By Lemma~\ref{l:ii}, we can choose the orientation of $\vec \nu_\rho$ in such a way that  
$$
    B^+ (x_0, \rho) : = 
    \left\{ z +t  \vec \nu_\rho: \;   z \in P(x_0, \rho), \, 
    t \ge 2 \varepsilon  \rho \right\} \cap B(x_0, \rho ) \subseteq \Omega
$$
and 
$$
      B^- (x_0, \rho) : = 
    \left\{ z -t  \vec \nu_\rho: \;   z \in P(x_0, \rho), \, 
    t \ge 2 \varepsilon  \rho \right\} \cap B(x_0, \rho ) \subseteq \Omega^c.
$$
Also, we define the hyperplanes $ P^+ (x_0, \rho)$ and $ P^- (x_0, \rho)$ by setting
$$
     P^+ (x_0, \rho) : = 
    \left\{ z +2 \varepsilon \rho  \vec \nu_\rho: \;   z \in P(x_0, \rho) 
 \right\}
$$
and 
$$
     P^- (x_0, \rho) : = 
    \left\{ z -2 \varepsilon \rho  \vec \nu_\rho: \;   z \in P(x_0, \rho) 
 \right\}
$$
and we denote by $Y(x_0, \rho)$ the point 
$$
    Y(x_0, \rho) : = x_0 + \rho \vec \nu_\rho. 
$$

Finally, for any $x \in \Omega$, we denote by $x_0 \in \partial \Omega$ the point such that $d(x, \Omega^c)=d(x,x_0)$ (if there is more than one such $x_0$, we arbitrarily fix one).

{\sc $\diamond$ Step 2.} We provide a preliminary construction: more precisely, given 
\begin{itemize}
\item $x \in \Omega$ such that
$d(x, \Omega^c) \leq 2r_0/7$ and 
\item $r$ satisfying $d(x, \Omega^c) /2  \leq r \leq  r_0/7$, 
\end{itemize}
the curve $\gamma_{x,r}$ is defined as follows. 
\begin{itemize}
\item[(I)] If $d(x, \Omega^c)/2 \leq r  \leq 2 d (x, \Omega^c)$, then $\gamma_{x,r}$ is simply the segment $[x, Y(x_0, r)]$. 
\item[(II)] If $2 d(x, \Omega^c) < r \leq r_0/7$, we denote by $k_0 \ge 1$ the biggest natural number $k$ satisfying 
$ {2^{-k} r \ge d(x, \Omega^c)}$ and we set 
$$
     \gamma_{x,r} := [x,Y(x_0, 2^{-k_0}r)] \cup 
     \bigcup_{k =0}^{k_0-1} [Y(x_0,2^{-k}r), Y(x_0,2^{-(k+1)}r)].
$$
\end{itemize}  
{\sc $\diamond$ Step 3.} We prove that in both cases (I) and (II) we have 
\begin{equation}
 \label{jonesprop2}
            \Hh^1(\gamma_{x,r})  \leq 4 r. 
\end{equation}
To handle case (I) we just observe that, since by assumption $d (x, \Omega^c) = d(x, x_0) \leq 2r$, then, 
by recalling $d(x_0, Y(x_0, r))=r$, property~\eqref{jonesprop2} follows.

To handle case (II), we first observe that, since $d(x, x_0) \leq 2^{-k_0}r$, then both $x$ and $Y(x_0, 2^{-k_0}r)$ belong to the closure of $B(x_0, 2^{-k_0}r)$. Also, by construction both $Y(x_0,2^{-k}r)$ and $Y(x_0,2^{-(k+1)}r)$ belong to the closure of 
$B(x_0, 2^{-k}r)$ and by combining these observations we conclude that 
\begin{eqnarray}
 \Hh^1(\gamma_{x,r}) &\leq& d(x, Y(x_0, 2^{-k_0}r))+ \sum_{k=0}^{k_0-1} d(Y(x_0,2^{-k}r) ,Y(x_0,2^{-(k+1)}r)) \notag \\
  &\leq& 2 \cdot 2^{-k_0} r +  \sum_{k =0}^{k_0-1}  2 \cdot 2^{-k} r \leq 2 r \sum_{k \in \N}    2^{-k}  = 4r.
\end{eqnarray}
{\sc $\diamond$ Step 4.} We prove that for every $z \in \gamma_{x, r}$
  \begin{equation}
  \label{jonesprop}
d(z,\Omega^c)\geq  \frac{29}{240} \, d(z,x). 
 \end{equation}
We start by handling case (I): we  work in the ball $B(x_0, 4r)$ and we recall the definition of $B^+(x_0, 4r)$ and of $B^-(x_0, 4r)$, given at {\sc Step 1}.  Since by assumption  $\varepsilon \leq 1/32$, we have 
$$
   16 \varepsilon r \leq \frac{r}{2} \leq d (x, \Omega^c) \leq d(x, B^-(x_0, 4r)) 
$$
and hence $x \in B^+(x_0, 4r)$. Let $\beta$ denotes the angle between  $\nu_r$ and $\nu_{4r}$,  then by Lemma \ref{angleLemma} applied with $M=4$ we get that provided $\varepsilon \leq 1/9$,  then 
$
    4 \varepsilon r \leq r \cos \beta,
$
so that $Y(x_0, r) \in B^+(x_0, 4r)$. By recalling that $x \in B^+(x_0, 4r)$, we conclude that $[x, Y(x_0, r) ] \subseteq B^+(x_0, 4r)$.

We are now ready to establish~\eqref{jonesprop}, so we fix $z \in [x, Y(x_0, r)]$.
To provide a bound from above on $d(z, x)$, we simply observe that, since both $x$ and $Y(x_0, r)$ belong to the closure of $B (x_0, 2r)$, then so does $z$ and hence 
\begin{equation}
\label{e:above}
   d(z, x) \leq 4r .
\end{equation}
Next, we provide a bound from below on $d(z, \Omega^c)$: since $z \in B^+(x_0, 4r) \subseteq \Omega$, then 
\begin{equation}
\label{e:bplus}
{d(z, \Omega^c) \ge d(z, \partial B^+(x_0, 4r))} = 
\min \Big\{  d(z, P^+  (x_0, 4r)), d (z, \partial B(x_0, 4r) )      \Big\} .
\end{equation}
First, we recall that $z \in B(x_0, 2r)$ and we provide a bound on the distance from $z$ to the  spherical part of $\partial B^+(x_0, 4r)$: 
$$
    d ( z, \partial B(x_0, 4r) ) = 4r - d(z, x_0) \ge 4r - 2r =2 r. 
$$
Next, we observe that 
$$
    d(z, P^+(x_0, 4r)) = d ( z, P(x_0, 4r) ) - 8 \varepsilon r
$$
and, since $z \in  [x, Y(x_0, r)]$, then 
$$
   d ( z, P(x_0, 4r) ) \ge \min \Big\{ d ( x, P(x_0, 4r), d ( Y(x_0, r), P(x_0, 4r) ) \Big\}. 
$$
Note that $d ( Y(x_0, r), P(x_0, 4r) )= r \cos \beta$ and, using Lemma \ref{angleLemma}, we conclude that $${d ( Y(x_0, r), P(x_0, 4r) )\geq r/2}$$ because $\varepsilon \leq 1/10$. Also, since $B^- (x_0, 4r) \subseteq \Omega^c$, then  
$$
 r/2 \leq  d(x, \Omega^c) \leq d(x, B^-(x_0, 4r))  =  d ( x, P(x_0, 4r) ) +  2 \varepsilon r 
$$
By  recalling~\eqref{e:bplus} and the inequality $\varepsilon \leq 1/600$ and by combining all the previous observations we conclude that 
\begin{equation}
\label{e:below}
\begin{split}
    d(z, \Omega^c)   & \ge d(z, \partial B^+(x_0, 4r) ) \ge     
     \min \{ d(z, P^+ (x_0, 4r) , 2r \}  =  
       \min \{ d(z, P (x_0, 4r)- 8 \varepsilon r, 2r \} \ge \\
       &  \ge   
     \min \Big\{ \min \big\{ d \big( x, P(x_0, 4r) \big), 
     d \big( Y(x_0, r), P(x_0, 4r) \big) \big\}- 8 \varepsilon r , 2r  \Big\}   \ge \\
       &  \ge    
     \min \Big\{ \min \big\{ r / 2 - 2 \varepsilon r, 
     r /2 \big\}- 8 \varepsilon r , 2r  \Big\}  = 
     \frac{r}{2} - 10 \varepsilon r \ge \frac{29}{60} r.       \\
     \end{split}
\end{equation}
Finally, by comparing~\eqref{e:below} and~\eqref{e:above} we obtain~\eqref{jonesprop}. \\
{\sc $\diamond$ Step 5.} We now establish~\eqref{jonesprop} in case (II). 

If $z \in [x, Y(x_0, 2^{-k_0}r]$, then we can repeat the argument we used in {\sc Step 4} by replacing $r$ with $2^{-k_0}r$, which satisfies 
$$
   d(x, \Omega^c) \leq   2^{-k_0} r \leq 2  d(x, \Omega^c). 
$$
Hence, we are left to consider the case when $z \in [Y(x_0,2^{-k}r), Y(x_0,2^{-(k+1)}r)]$ for some natural number $k \leq k_0 -1$. We set $\rho:= 2^{-k} r$ and we work in the ball $B(x_0, 2 \rho)$. We denote by  $\alpha$ the angle between $\nu_{2 \rho}$ and $\nu_{\rho}$, and by $\beta$ the angle between $\nu_{2 \rho}$ and $\nu_{\rho/2}$. Due to Lemma \ref{angleLemma} applied with $M=2$ and $M=4$, we know that, if $\varepsilon\leq 1/13$, then  
$$
    \rho \cos \alpha  \ge  4 \varepsilon \rho \qquad 
    \frac{1}{2} \rho \cos \beta \ge 4 \varepsilon \rho, 
$$
so that both $Y(x_0, \rho)$ and $Y(x_0, \rho/2)$ belong to $B^+(x_0, 2 \rho)$. Hence, given 
$${z \in [Y(x_0, \rho), Y(x_0, \rho/2)] } \subseteq B^+ (x_0, 2 \rho)\subseteq \Omega,$$ we have 
$
{ d(z, \Omega^c) \ge d(z, \partial B^+(x_0, 2 \rho))}.
$
The distance from $z$ to the spherical part of $\partial B^+(x_0, 2 \rho)$ is bounded from below by 
$\rho$, while the distance from $z$ to $P^+(x_0, 2 \rho)$ is bounded from below by 
$
  \frac{1}{2} \rho - 4 \varepsilon \rho \ge  \frac{1}{4} \rho
$
provided that $\varepsilon \leq 1/16.$ Hence, 
$
   d(z, \Omega^c) \ge \rho/4.
$
To provide an upper bound on $d(z, x)$ we observe that, since ${d(x, x_0)= d(x, \Omega^c) \leq 2^{-k}r}$, then both $z$ and $x$ belong to the closure of $B(x_0, \rho)$. Hence, ${d(x, z) \leq 2 \rho}$ and~\eqref{jonesprop} holds.\\
{\sc $\diamond$ Step 6.} We are finally ready to show that $\Omega$ is a Jones-flat domain. Given $x, y \in \Omega$ satisfying $d(x, y) \leq r_0/7$, there are two possible cases:
\begin{enumerate}
\item if either $d(x,\Omega^c) \ge 2 d(x,y)$ or $d(y,\Omega^c) \ge 2 d(x,y)$, then we set $\gamma : =[x,y]$. To see that $\gamma$ satisfies~\eqref{jonesprop3}, let us assume that $d(x,\Omega^c) \ge 2 d(x,y)$ (the other case is completely analogous), then $y \in B(x, d(x,y))\subseteq \Omega$ and $[x, y] \subseteq \Omega$. Also, since 
 \begin{eqnarray}
 \sup_{z \in [x,y]} \frac{d(z,x)d(z,y)}{d(x,y)} = \frac{1}{4}d(x,y), \label{geod}
 \end{eqnarray}
 then for any $z \in \gamma$, 
 $$
    d(z,\Omega^c) \geq d(x,\Omega^c) -d(z,x) \geq d(x,y)\geq 4d(z,x)d(z,y)/d(x,y). 
 $$
 Hence, $\gamma$ satisfies~\eqref{jonesprop3} provided that $\delta =4$. 
\item we are left to consider the case when both $d(x,\Omega^c) < 2 d(x,y)$ and $d(y,\Omega^c) < 2 d(x,y)$.  Denote by $x_0 \in \partial \Omega$ a point such that $d(x,\Omega^c)=d(x,x_0)$ and $y_0 \in \partial \Omega$ a point such that $d(y,y_0)=d(y,\Omega^c)$ and set 
$r:=d(x,y)\leq r_0/7$. We define
\begin{equation}
\label{e:gamma}
\gamma: = \gamma_{x,r}\cup\gamma_{y,r}  \cup [Y(x_0,r),Y(y_0,r)].
\end{equation}
{\sc Step 7} is devoted to showing that $\gamma$ satisfies~\eqref{e:lenght} and~\eqref{jonesprop3}. 
\end{enumerate}
{\sc $\diamond$ Step 7.} First, we establish~\eqref{e:lenght}: we observe that 
$$
    d \Big( Y(x_0,r),Y(y_0,r) \Big) \leq  d \Big( Y(x_0,r), x_0 \Big)  + d(x_0, x) +  d(x, y) +
    d(y, y_0) + 
    d \Big( y_0,Y(y_0,r) \Big) \leq 7r
$$
and hence by using~\eqref{jonesprop2} 
$$
     \Hh^1(\gamma) \leq  \Hh^1(\gamma_{x,r}) +   d \Big( Y(x_0,r),Y(y_0,r) \Big)  + 
      \Hh^1(\gamma_{y,r})  \leq 15r
$$
which proves \eqref{e:lenght}.

Next, we establish~\eqref{jonesprop3}: we denote by $d_{\gamma}$ the geodesic distance on the curve $\gamma$ and we observe that 
\begin{equation}
\label{e:geo} 
          \frac{d(z,y)}{15 d(x,y)} \leq \frac{d_\gamma(z,y)}{d_\gamma(x,y)}\leq 1.
\end{equation}
Hence, if $z \in \gamma_{x, r}$, then by using~\eqref{jonesprop} we obtain 
$$    d(z,\Omega^c)\geq \frac{29}{240} d(z,x)\geq \frac{29}{240 \cdot 15}
    \left(\frac{d(z,x)d(z,y)}{d(x,y)}   \right)        
$$
and we next observe $29/240 \cdot 15 \ge 5 / 60 \cdot 15 = 1/ 180.$  Since the same argument works in the case when $z\in \gamma_{y,r}$, then we are left to esablish~\eqref{jonesprop3} in the case when $z$ lies on the segment  $[Y(x_0,r),Y(y_0,r)]$. 

We first observe that 
\begin{equation}
\label{e:dxY}
   d(x_0, Y(y_0, r))  \leq
  d(x_0, x) + d(x, y) + d(y, y_0) + d(y_0,Y (y_0, r) ) \leq 6r 
\end{equation}
and hence $[Y(x_0,r),Y(y_0,r)] \subseteq B (x_0, 7r)$. Next, we note that $7r \leq r_0$ and we use~\eqref{e:below} to get 
\begin{equation}
\label{e:dYp}
  \frac{29}{60}r \leq d(Y(x_0, r), \Omega^c) \leq d(Y(x_0, r), P^-(x_0, 7r)),  
\end{equation}
hence since $\varepsilon$ is so small that $28 \varepsilon r \leq  29 r/ 60$, then we have $d(Y(x_0, r), P^-(x_0, 7r)) \ge 28 \varepsilon r$, which means that $Y(x_0, r) \in B^+(x_0, 7r)$. By repeating the same argument we get  ${Y(x_0, r) \in B^+(x_0, 7r)}$ and hence 
${[Y(x_0,r),Y(y_0,r)]  \subseteq B^+(x_0, 7r)}$. 

We fix $z \in [Y(x_0,r),Y(y_0,r)] $ and we observe that
\begin{equation}
\label{e:dzx}
\begin{split}
    d(z, x) & \leq d(z, Y(x_0, r)) + d(Y(x_0, r), x_0) + d(x_0, x)  \\
&    \leq 
    d(Y(y_0,r), Y(x_0, r)) + d(Y(x_0, r), x_0) + d(x_0, x) 
    \leq 
    7r + r + 2r =10r. \\
\end{split}
\end{equation}
Also,  
\begin{equation}
\label{e:dzo}
    d(z, \Omega^c) \ge d(z, \partial B^+(x_0, 7r)) \ge 
    \min \Big\{ d (z, \partial B(x_0, 7r) ); d(z, P^+(x_0, 7r)) \Big\}
\end{equation}
and by using~\eqref{e:dxY} we get 
$$
    d(z, \partial B(x_0, 7r)  ) \ge r.  
$$
Also, we have  
$$
    d(z, P^+(x_0, 7r))  \ge \min \Big\{   d(Y(x_0, r), P^+(x_0, 7r)), d( Y(y_0, r), P^+(x_0, 7r))   
    \Big\}  
$$
and  by recalling~\eqref{e:dYp} we get  that 
$$
    d(Y(x_0, r), P^+(x_0, 7r)) = d(Y(x_0, r), P^-(x_0, 7r)) - 28 \varepsilon r \ge 
    \frac{29}{60}r  - 28 \varepsilon r  \ge \frac{r}{3}    .
$$ 
Since $Y(y_0,r)$ satisfies the same estimate, then by recalling~\eqref{e:geo},~\eqref{e:dzx} and~\eqref{e:dzo}
we get 
$$
    d(z, \Omega^c)  \ge \frac{r}{3} \ge \frac{1}{3 \cdot 10} d(z, x) \ge 
    \frac{1}{3 \cdot 10 \cdot 15} \frac{d(z, x) d(z, y)}{d(x, y)}, 
$$
which concludes the proof because $3 \cdot 10 \cdot 15 = 450$. 
\end{proof}

\begin{remark} There are Jones-flat domains that are not Reifenberg-flat, for instance a Lipschitz domain with sufficiently big constant (for example a heavily non convex polygonal domain). Actually, Jones \cite[Theorem 3]{J} proved that, for a simply connected domain in dimension 2, being Jones-flat is equivalent to being an extension domain, which is also known to be equivalent to the fact that the boundary is a quasicircle (see the introduction of \cite{J}).
\end{remark}

\section{Extension properties of Reifenberg-flat domains and applications }
\label{appl} 
In this section we combine the analysis in~\cite{J} with Theorem~\ref{jonesrei} to prove that domains that are sufficiently flat in the sense of Reifenberg satisfy the extension property. We also discuss some direct consequences.  Note that in this section we always assume that $\Omega$ is connected, as Jones did in~\cite{J}. In Section~\ref{connected} we prove that the connectedness assumption can be actually removed in the case of Reifenberg flat domains. Note also that, before providing the precise extension result, we have to introduce a preliminary lemma comparing different notions of ``radius" of a given domain $\Omega$.
\subsection{``Inner radius", ``outer radius" and ``diameter" of a given domain}
We term outer radius of a nonempty set $\Omega \subseteq \R^N$ the quantity
\begin{equation}
\label{e:outrad}
        Rad (\Omega) : = \inf_{x \in \Omega}  \sup_{y \in \Omega} d(x, y),
\end{equation}
and we term inner radius the quantity 
\begin{equation}
\label{e:inrad}
        rad (\Omega) : = \sup_{x \in \Omega}  \sup \{r>0: \; B(x,r)\subset \Omega\}.
\end{equation}
The inner radius is the radius of the biggest ball that  could fit inside $\Omega$, whereas the outer radius, as seen below, is the radius of the smallest ball, centered in $\overline{\Omega}$, that contains $\Omega$.  

Also, we recall that $Diam (\Omega)$ denotes the diameter of $\Omega$, 
namely
$$
   Diam (\Omega) : = \sup_{x, y \in \Omega} d (x, y). 
$$
For the convenience of the reader, we collect some consequences of the definition in the following lemma.
\begin{lem}
\label{l:rad}
Let $\Omega$ be a nonempty subset of $\R^N$, then the following properties 
hold:
\begin{itemize}
\item[(i)] We have the formula
\begin{equation}
\label{e:rad} 
    Rad(\Omega)=\inf_{x \in \Omega} \inf\{r>0: \; \Omega \subset B(x,r)\}.
\end{equation}
 
 Also, if $Rad (\Omega) < + \infty$, then there is a point $ x \in \overline{\Omega}$ such that $\Omega \subseteq B(x, Rad (\Omega))$.
 \item[(ii)] $rad(\Omega)\leq Rad(\Omega)\leq \diam(\Omega). $
\item[(iii)] If  $\Omega$ is an $(\varepsilon, r_0)$-Reifenberg-flat domain for some $r_0 >0$ and some $\varepsilon$ satisfying $0 < \varepsilon < 1/2$, then  $r_0/4 \leq rad (\Omega)\leq Rad(\Omega) \leq \diam(\Omega) .$
\end{itemize}
\end{lem}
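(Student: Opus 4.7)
The plan is to address the three claims of Lemma~\ref{l:rad} in order, with the real work concentrated in part $(iii)$, which leverages the Reifenberg separation property (Lemma~\ref{l:ii}).

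For part $(i)$, the key observation is that for any fixed $x \in \Omega$, since $\Omega$ is open, the identity $\sup_{y \in \Omega} d(x,y) = \inf\{r > 0 : \Omega \subseteq B(x,r)\}$ holds: the direction $\leq$ is immediate, while for $\geq$, if some $y^* \in \Omega$ were to attain the supremum, a small ball around $y^*$ inside $\Omega$ would contain points strictly farther from $x$, a contradiction; hence the supremum is never attained and $\Omega \subseteq B\bigl(x, \sup_y d(x,y) + \eta\bigr)$ for every $\eta > 0$. Taking the infimum over $x \in \Omega$ then yields~\eqref{e:rad}. To obtain a center in $\overline{\Omega}$ realizing the outer radius, I would choose a minimizing sequence $\{x_n\} \subset \Omega$ with $\sup_{y \in \Omega} d(x_n, y) \to Rad(\Omega)$; since $\sup_y d(\cdot, y)$ is $1$-Lipschitz and the $x_n$ are bounded (as $Rad(\Omega) < +\infty$), I can pass to a subsequence with $x_n \to x^* \in \overline{\Omega}$ satisfying $\sup_{y \in \Omega} d(x^*, y) = Rad(\Omega)$; reusing the openness of $\Omega$ once more converts this identity into $\Omega \subseteq B(x^*, Rad(\Omega))$.

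For part $(ii)$, the inequality $Rad(\Omega) \leq \diam(\Omega)$ is immediate from the definitions. For $rad(\Omega) \leq Rad(\Omega)$, I would fix $x \in \Omega$ with $B(x,r) \subseteq \Omega$ and any $x' \in \Omega$: for $x \neq x'$, the point $y := x + (r-\delta)(x - x')/|x-x'|$ lies in $B(x,r) \subseteq \Omega$ and satisfies $d(x',y) = d(x',x) + r - \delta$, so letting $\delta \to 0^+$ gives $\sup_{y \in \Omega} d(x',y) \geq r$; the case $x = x'$ is trivial. Taking the infimum over $x' \in \Omega$ yields $Rad(\Omega) \geq r$, and then the supremum over the admissible pairs $(x,r)$ delivers $Rad(\Omega) \geq rad(\Omega)$.

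For part $(iii)$, granted $(ii)$, it suffices to exhibit a ball of radius $r_0/4$ contained in $\Omega$. If $\partial \Omega = \emptyset$ then $\Omega = \R^N$ and the bound is trivial; otherwise I pick any $x_0 \in \partial \Omega$ and invoke Lemma~\ref{l:ii} at scale $r_0$ to orient the unit normal $\nu$ to $P(x_0, r_0)$ so that the upper spherical cap $B^+(x_0, r_0)$ lies in $\Omega$. Setting $y_0 := x_0 + t\,\nu$ with $t \in (2\varepsilon r_0, r_0)$ puts $y_0$ in $B^+(x_0, r_0) \subseteq \Omega$. For any $z \in \partial \Omega$ I distinguish two cases: if $z \in B(x_0, r_0)$, Reifenberg flatness $(i)$ provides $z' \in P(x_0,r_0)$ with $|z - z'| \leq \varepsilon r_0$, and two applications of the triangle inequality yield $d(y_0, z) \geq d(y_0, P(x_0, r_0)) - \varepsilon r_0 = t - \varepsilon r_0$; if $z \notin B(x_0, r_0)$, then $d(y_0, z) \geq d(z, x_0) - d(x_0, y_0) \geq r_0 - t$. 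Choosing $t$ so that both $t - \varepsilon r_0 \geq r_0/4$ and $r_0 - t \geq r_0/4$---achievable for small $\varepsilon$ by the choice $t = (1/4 + \varepsilon) r_0$, which also exceeds $2\varepsilon r_0$---ensures $B(y_0, r_0/4) \cap \partial \Omega = \emptyset$. The connectedness of $B(y_0, r_0/4)$ together with $y_0 \in \Omega$ then force $B(y_0, r_0/4) \subseteq \Omega$, giving $rad(\Omega) \geq r_0/4$.

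The main obstacle is calibrating $t$ in part $(iii)$ to reconcile the three simultaneous constraints: the separation lemma demands $t \geq 2\varepsilon r_0$ for $y_0$ to sit inside $\Omega$, while avoiding the Reifenberg slab and the sphere $\partial B(x_0, r_0)$ demands respectively $t \geq (1/4 + \varepsilon) r_0$ and $t \leq (3/4) r_0$. These can be reconciled uniformly in the small-$\varepsilon$ regime, which is the setting of interest here since Theorem~\ref{jonesrei} already imposes $\varepsilon \leq 1/600$.
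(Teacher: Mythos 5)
Parts (i) and (ii) are fine and essentially equivalent to the paper's argument (the paper derives (ii) from (i) via the containing ball, while you argue directly by pushing a point of $B(x,r)$ away from $x'$; both work). One small remark on (i): your upgrade from $\Omega\subseteq \overline B(x^*,Rad(\Omega))$ to the open ball $B(x^*,Rad(\Omega))$ uses that $\Omega$ is open, whereas the lemma is stated for an arbitrary nonempty subset; the paper itself only concludes $\Omega\subset\overline B(x_0,Rad(\Omega))$, so this is a wrinkle in the statement rather than in your argument, but you should not present the open-ball version as following for general $\Omega$.

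The genuine gap is in (iii). The lemma asserts $rad(\Omega)\ge r_0/4$ for \emph{every} $\varepsilon<1/2$, but your construction requires the center $y_0=x_0+t\vec\nu$ to lie in the cap $B^+(x_0,r_0)$, hence $t\ge 2\varepsilon r_0$, simultaneously with $t\ge(1/4+\varepsilon)r_0$ and $t\le (3/4)r_0$. These constraints are incompatible once $\varepsilon$ is not small: your explicit choice $t=(1/4+\varepsilon)r_0$ needs $\varepsilon\le 1/4$, and even optimizing $t$ the system has no solution for $\varepsilon>3/8$ (e.g.\ $\varepsilon=0.45$ forces $t\ge 0.9\,r_0>0.75\,r_0$). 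Appealing to the regime $\varepsilon\le 1/600$ of Theorem~\ref{jonesrei} does not prove the statement as written. The paper closes exactly this case by an extra step you are missing: from $B^+(x_0,r_0)\subseteq\Omega$ together with condition $i)$ (the boundary lies within $\varepsilon r_0$ of the plane) one upgrades the clearance from $2\varepsilon r_0$ to $\varepsilon r_0$, i.e.\ $\{z+t\vec\nu:\ z\in P(x_0,r_0),\ t\ge\varepsilon r_0\}\cap B(x_0,r_0)\subseteq\Omega$, and then a ball of radius $(1-\varepsilon)r_0/2>r_0/4$ fits for every $\varepsilon<1/2$. Alternatively, you can repair your own argument without that upgrade by decoupling the center from $B^+$: take $t=(1+\varepsilon)r_0/2$ and radius $\rho=(1-\varepsilon)r_0/2>r_0/4$; your two distance estimates show $B(y_0,\rho)\cap\partial\Omega=\emptyset$, and the point $x_0+(r_0-\delta)\vec\nu$, for small $\delta>0$, lies both in $B(y_0,\rho)$ and in $B^+(x_0,r_0)\subseteq\Omega$, so connectedness again gives $B(y_0,\rho)\subseteq\Omega$ for every $\varepsilon<1/2$.
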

\begin{proof}
To establish property (i), we first observe that, if $\Omega$ is not bounded, then $Rad (\Omega) = + \infty$ and formula~\eqref{e:rad} is trivially satisfied. Also, the assumption $Rad (\Omega) < + \infty$ implies that the closure $\overline{ \Omega}$ is compact. Hence, if  $Rad (\Omega) < + \infty$, then 
\begin{equation}
\label{e:min}
    Rad (\Omega) = \min_{x \in \overline{\Omega}}  \sup_{y \in \Omega} d(x, y)
\end{equation}
and if we term $x_0\in \overline{\Omega}$ any point that realizes the minimum in \eqref{e:min} we have $\Omega \subset \overline{B}(x_0,Rad(\Omega))$. This establishes the inequality 
$$Rad(\Omega)\geq \inf_{x \in \Omega} \inf\{r>0: \; \Omega \subset B(x,r)\}.$$
To establish the reverse inequality we observe that if  $x\in \Omega$ is any arbitrary point and  $r>0$ is such that  $\Omega \subset B(x,r)$, then $\sup_{y \in \Omega}d(x,y)\leq r$.  By taking the infimum in $x$ and $r$ we conclude. This ends the proof of property (i).

To establish (ii), we focus on the case when $Rad(\Omega)<+\infty$, because otherwise $\Omega$ is unbounded and (ii) trivially holds. Hence, by relying on (i) we infer that $\Omega \subseteq B:=B(x_0, Rad (\Omega))$ for some point $x_0\in \Omega$. Given $x \in \Omega$ and $r>0$ satisfying $B(x,r)\subset \Omega$, we have $B(x,r)\subset B(x_0,Rad(\Omega))$. Hence,  $ d(x, x_0)+r\leq Rad(\Omega)$ and hence $r\leq Rad(\Omega)$.  By taking the supremum in $r$ and $x$ we get finally $rad(\Omega)\leq Rad(\Omega)$. The inequality $Rad(\Omega)\leq \diam(\Omega)$ directly follows from the two definitions.

Given (ii), establishing property (iii) amounts to show that 
 \begin{eqnarray}
 rad(\Omega)\geq r_0/4. \label{amontrer17}
 \end{eqnarray}
We can assume with no loss of generality that $\partial \Omega\not = \emptyset$, otherwise $\Omega=\R^N$ and~\eqref{amontrer17}  trivially holds in this case (we recall that the case $\Omega=\emptyset$ is ruled out by the definition of Reifenberg-flat domain). 

Hence, we fix $y \in \partial \Omega$, denote by $P(y, r_0)$ the hyperplane in the definition and let $\vec \nu$ be its normal vector. We choose the orientation of $\vec \nu$ in such a way that 
\begin{equation}
\label{e:incl}
    \{ z + t \nu: \; z \in P(y, r_0), \; t \ge 2 \varepsilon r \} \cap B(y, r_0 ) \subseteq \Omega. 
\end{equation}
Since $d_H (P(y, r_0) \cap B(y, r_0), \partial \Omega \cap B(y, r_0)    ) \leq \varepsilon r$, then from~\eqref{e:incl} we infer that actually 
$$
      \{ z + t \nu: \; z \in P(y, r_0), \; t \ge  \varepsilon r \} \cap B(y, r_0 ) \subseteq \Omega. 
$$
By recalling $\varepsilon < 1/2$, we infer that there is $x\in \Omega$ such that $B(x,r_0/4)\subset \Omega$ and this establishes \eqref{amontrer17}.
 \end{proof}

\subsection{Extension properties and applications}
The following extension property of Reifenberg flat domains is established by combining Theorem~\ref{jonesrei} above with Jones'analysis (Theorem 1 in~\cite{J}).
\begin{cor} \label{ext} 
Let $\Omega \subseteq \mathbb R^N$ be a connected, $(\varepsilon,r_0)$-Reifenberg-flat domain. If $\varepsilon \leq 1/600$, then, for every $p \in [1,+\infty] $, there is an extension operator $E : W^{1,p}(\Omega) \to W^{1,p}(\R^N)$ satisfying
$$\|E(u)\|_{W^{1,p}(\R^N)} \leq C \|u\|_{W^{1,p}(\Omega)},$$
where the constant $C$ only depends on $N$, $p$,  and $r_0$.
\end{cor}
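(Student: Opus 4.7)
The plan is to deduce the corollary as a direct consequence of Theorem~\ref{jonesrei} combined with Jones' classical extension theorem (\cite[Theorem 1]{J}). The heavy lifting has already been done upstream, so the proof is essentially a one-line assembly.

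Concretely, I would first invoke Theorem~\ref{jonesrei}: since by assumption $\varepsilon \leq 1/600$, the domain $\Omega$ is a $(1/450, r_0/7)$-Jones-flat domain. Observe that the first Jones parameter $\delta = 1/450$ is universal, while the second parameter $R_0 = r_0/7$ depends only on $r_0$. Next, since $\Omega$ is connected by hypothesis, I would apply Jones' theorem, which produces, for every $p \in [1,+\infty]$, a bounded linear extension operator
$$
E: W^{1,p}(\Omega) \longrightarrow W^{1,p}(\R^N), \qquad E(u)|_{\Omega} \equiv u,
$$
whose operator norm is controlled by a constant depending only on $(\delta, R_0, N, p)$. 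Substituting $\delta = 1/450$ and $R_0 = r_0/7$ then yields a constant $C = C(N, p, r_0)$ as asserted.

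The only point requiring attention, and what I would flag as the main (though mild) obstacle, is the discrepancy between Definition~\ref{defreif}, which permits unbounded Reifenberg-flat domains such as a half-space or $\R^N$ itself, and the paper's Jones-flatness definition in Section~\ref{topos}, which explicitly restricts to bounded sets. One bridges this gap by observing that Jones' construction of $E$ is intrinsically local at scale $R_0$: it is based on a Whitney decomposition of $\Omega^c$ paired with a reflection procedure that matches each Whitney cube of $\Omega^c$ to a cube of $\Omega$ via the Jones arc joining two points at mutual distance $\lesssim R_0$. The resulting operator norm bound depends only on $(\delta, R_0, N, p)$, with boundedness of $\Omega$ playing no quantitative role. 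Hence Jones' theorem applies verbatim to (possibly unbounded) Reifenberg-flat domains satisfying the hypotheses, and the extreme sub-case $\Omega = \R^N$ is trivial since $E = \mathrm{Id}$ works. This completes the assembly.
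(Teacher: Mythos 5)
Your assembly misses what the paper regards as the only nontrivial point, and the step you assert without proof is in fact false as stated. You claim that Jones' theorem yields an operator norm ``controlled by a constant depending only on $(\delta, R_0, N, p)$'' and that the size of $\Omega$ plays no quantitative role. In general the norm of Jones' extension operator also depends on the outer radius $Rad(\Omega)$ defined in~\eqref{e:outrad}; Jones' original statement hides this dependence only because he normalizes the radius (see the remark at the top of page 76 of~\cite{J}), and the statements in~\cite{Chua-indiana} and~\cite{BrewsterMitreas} make the dependence explicit. The dependence is unavoidable: a ball $B(0,\rho)$ with $\rho$ small is a $(\delta, R_0)$-Jones-flat domain with $\delta$ universal and any fixed $R_0$, yet for $1\leq p<N$ the function $u\equiv 1$ has $\|u\|_{W^{1,p}(B(0,\rho))}\simeq \rho^{N/p}$ while any extension to $W^{1,p}(\R^N)$ costs at least a $p$-capacity term of order $\rho^{(N-p)/p}$, so the extension norm blows up as $\rho\to 0$ even though $(\delta,R_0,N,p)$ are fixed. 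Thus no bound of the form you assert can hold for general Jones-flat domains, and your argument as written does not deliver a constant depending only on $N$, $p$, $r_0$.

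The missing ingredient is precisely Lemma~\ref{l:rad}(iii): an $(\varepsilon,r_0)$-Reifenberg-flat domain satisfies $Rad(\Omega)\geq rad(\Omega)\geq r_0/4$, so the degeneration described above cannot occur. This is how the paper proceeds: it applies \cite[Theorem 1]{J} in the form where the norm is bounded by $C(N,p,r_0,M)$ whenever $1/Rad(\Omega)\leq M$ (see the remarks in \cite[pages 9--10]{BrewsterMitreas}), and then uses $Rad(\Omega)\geq r_0/4$ to take $M=4/r_0$, obtaining $C=C(N,p,r_0)$. Your discussion of unbounded Reifenberg-flat domains is a side issue (and essentially harmless, since Jones' construction is local at scale $R_0$), but it addresses large domains, whereas the real obstruction is small ones; you need to insert the lower bound on $Rad(\Omega)$ from Lemma~\ref{l:rad} to close the gap.
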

\begin{proof} 
The corollary is a direct application of~\cite[Theorem 1]{J}. 

The only nontrivial point we have to address is that, in general, the norm of the extension operator $E$ depends on $Rad (\Omega)$, see for examples the statements of Jones' Theorem provided in the paper by Chua~\cite{Chua-indiana} and in the very recent preprint by Brewster, D. Mitrea, I. Mitrea and M. Mitrea~\cite{BrewsterMitreas}. Note that in Jones' original statement the dependence on the radius was not mentioned because the radius was fixed (see the remark at the top of page 76 in~\cite{J}). 

However, by applying for example the remarks in~\cite[pages 9 and 10]{BrewsterMitreas} to Reifenberg-flat domains, we get that the norm of $E$ is bounded by $C(N, p, r_0, M)$ if $ 1/ Rad (\Omega) \leq M$. By recalling that $r_0/4 \leq Rad(\Omega)$, we finally infer that the bound on the  norm of the extension operator only depends on $N, p$ and $r_0$ and this concludes the proof.
\end{proof}

\begin{remark} 
\label{chuachrist}To simplify the exposition, we chose to only state the extension property for classical Sobolev Spaces. However, the extension property also applies to other classes of spaces. For instance, Chua~\cite{Chua-indiana,Chua-illinois,Chua-Canad}, extended  Jones' Theorem to weighted Sobolev spaces. These spaces are defined by replacing the Lebesgue measure by a weighted measure $\omega dx$, where $\omega$ is a function satisfying suitable growth conditions and Poincar\'e inequalities.  Also, Christ~\cite{Christ} established the extension property for Sobolev spaces of fractional order. 

The results of both Christ~\cite{Christ} and Chua~\cite{Chua-indiana,Chua-illinois,Chua-Canad} apply to Jones-flat domains, hence by relying on Theorem~\ref{jonesrei} we infer that they apply to $(1/600,r_0)$-Reifenberg-flat domains as well. 
\end{remark}
As a consequence of Corollary \ref{ext} we get that the classical Rellich-Kondrachov Theorem holds in Reifenberg-flat domains. 
\begin{prop}
\label{embedding}
Let $\Omega \subseteq \mathbb R^N$ be a bounded, connected $(\varepsilon,r_0)$-Reifenberg-flat domain and assume $0<\varepsilon \leq 1/600$. 

If $1\leq p < N$, set $p^*:=\displaystyle{\frac{Np}{N-p}}$.  Then the Sobolev space $W^{1,p}(\Omega)$ is continuously embedded in the  space $L^{p^*}(\Omega)$ and is compactly embedded in $L^q(\Omega)$  for every $1\leq q < p^*$.   

If $p \ge N$, then the Sobolev space $W^{1,N}(\Omega)$ is continuously embedded in the  space $L^{\infty}(\Omega)$ and is compactly embedded
$L^q(\Omega)$  for every $q \in [1, + \infty[$. 

Also, the norm of the above embedding operators  only depends on $N$, $r_0$, $q$, $p$ and $Rad(\Omega)$. 
\end{prop}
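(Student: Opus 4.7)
The plan is to reduce Proposition~\ref{embedding} to the classical Rellich-Kondrachov theorem on a ball, using the extension operator supplied by Corollary~\ref{ext}. The key point is that Corollary~\ref{ext} already does the heavy lifting: once we can extend Sobolev functions on $\Omega$ to the whole of $\R^N$ with a controlled norm, both the continuous embeddings and the compactness properties transfer to $\Omega$ essentially for free.

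First I would invoke Corollary~\ref{ext} to obtain a bounded extension operator $E\colon W^{1,p}(\Omega)\to W^{1,p}(\R^N)$ whose operator norm depends only on $N$, $p$, and $r_0$. Since $\Omega$ is bounded, Lemma~\ref{l:rad}(i) produces a point $x_0\in\overline{\Omega}$ such that $\Omega\subseteq B:=B(x_0,Rad(\Omega))$. Composing $E$ with the norm-one restriction $W^{1,p}(\R^N)\to W^{1,p}(B)$ gives a bounded linear map $T\colon W^{1,p}(\Omega)\to W^{1,p}(B)$ whose operator norm depends only on $N$, $p$, $r_0$.

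Next, since $B$ is an open ball and hence a smooth bounded domain, the classical Rellich-Kondrachov theorem applies on $B$: when $1\le p<N$ one has a continuous embedding $W^{1,p}(B)\hookrightarrow L^{p^*}(B)$ together with compact embeddings $W^{1,p}(B)\hookrightarrow L^q(B)$ for every $1\le q<p^*$; when $p\geq N$ one has the corresponding continuous embedding into $L^\infty(B)$ (for $p>N$) and compact embeddings into every $L^q(B)$ with $q$ finite. The relevant embedding constants on $B$ depend only on $N$, $p$, $q$ and on $Rad(\Omega)$. Postcomposing $T$ with the norm-one restriction from $L^q(B)$ (or $L^\infty(B)$) to the corresponding space on $\Omega$ then yields the claimed continuous embeddings for functions in $W^{1,p}(\Omega)$, with final constants depending only on $N$, $p$, $q$, $r_0$, and $Rad(\Omega)$.

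For the compactness assertions, given a bounded sequence $\{u_n\}\subseteq W^{1,p}(\Omega)$, the extensions $\{E(u_n)\}$ form a bounded sequence in $W^{1,p}(B)$ and hence admit a subsequence convergent in the appropriate $L^q(B)$ by the classical Rellich-Kondrachov theorem; restricting that subsequence to $\Omega$ produces a subsequence of $\{u_n\}$ convergent in $L^q(\Omega)$. There is no genuine obstacle here beyond a careful bookkeeping of the dependence of the constants: the substantive content has already been secured by Theorem~\ref{jonesrei} and Corollary~\ref{ext}, and the present proposition is essentially a corollary of those results combined with the standard embedding theorems on balls.
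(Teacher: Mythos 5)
Your proposal is correct and follows essentially the same route as the paper: extend via Corollary~\ref{ext}, enclose $\Omega$ in a ball of radius $Rad(\Omega)$ using Lemma~\ref{l:rad}(i), apply the classical Rellich--Kondrachov theorem on that ball, and restrict back to $\Omega$, tracking the constants. The paper's own proof is just a condensed version of exactly this argument.
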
    
\begin{proof} We first use the extension operator provided by Corollary \ref{ext} and then we apply the classical Embedding Theorem in a ball of radius $Rad(\Omega)$ containing $\Omega$ (see property (i) in the statement of Lemma~\ref{l:rad}).
\end{proof}

As an example of application of Proposition \ref{embedding}, we establish  a   uniform bound on the $L^{\infty}$ norm of Neumann eigenfunctions defined in Reifenberg-flat domains. We use this bound in the companion paper~\cite{lms2}. Here is the precise statement. We recall that we term ``Neumann eigenfunction" an eigenfunction for the Laplace operator subject to homogeneous Neumann conditions on the boundary of the domain. 
\begin{prop}\label{uestim} Let $\Omega \subseteq \mathbb R^N$ be a bounded, connected, $(\varepsilon,r_0)$-Reifenberg-flat domain and let $u$ be a Neumann eigenfunction associated to the eigenvalue $\mu$. If $\varepsilon \leq 1/600$, then $u$ is bounded and 
\begin{eqnarray}
\label{e:linfty}
\|u \|_{L^\infty(\Omega)}\leq C (1+\sqrt{\mu})^{\gamma(N)} \|u\|_{L^{2}(\Omega)}, \label{desirein}
\end{eqnarray}
where $\gamma(N)=\max\big\{ \frac{N}{2},\frac{2}{N-1} \big\}$ and $C=C(N, r_0,Rad(\Omega))$.
\end{prop}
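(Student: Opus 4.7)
The plan is to run a Moser iteration on the eigenvalue equation, using the extension property of Corollary~\ref{ext} and the associated Sobolev embedding of Proposition~\ref{embedding} as a substitute for boundary regularity. The starting point is the weak formulation
\begin{equation*}
\int_\Omega \nabla u \cdot \nabla \varphi \dx = \mu \int_\Omega u \, \varphi \dx, \qquad \forall \, \varphi \in W^{1,2}(\Omega),
\end{equation*}
in which the homogeneous Neumann condition is built in, so that no boundary term appears on testing.

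First, for a parameter $\beta \ge 2$, I would test with $\varphi = |u_M|^{\beta - 2} u$, where $u_M := ((-M) \vee u) \wedge M$ is a truncation keeping $\varphi \in W^{1,2}(\Omega)$, and then let $M \to \infty$ by monotone convergence (justified a posteriori by the integrability produced at each iteration step). Using the Leibniz identity $\bigl|\nabla |u|^{\beta/2}\bigr|^2 = (\beta/2)^2 |u|^{\beta-2} |\nabla u|^2$, this yields the Caccioppoli-type bound
\begin{equation*}
\int_\Omega \bigl| \nabla |u|^{\beta/2} \bigr|^2 \dx \;=\; \frac{\beta^2 \mu}{4(\beta - 1)} \int_\Omega |u|^\beta \dx \;\leq\; C \, \beta \, \mu \int_\Omega |u|^\beta \dx.
\end{equation*}

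Next, I would apply Proposition~\ref{embedding} to $v := |u|^{\beta/2}$. For $N \ge 3$, the embedding $W^{1,2}(\Omega) \hookrightarrow L^{2^*}(\Omega)$ with $2^* = 2N/(N-2)$ gives, on setting $\sigma := N/(N-2)$,
\begin{equation*}
\|u\|_{L^{\sigma \beta}(\Omega)}^{\beta} \;\leq\; C \bigl( \|\nabla v\|_{L^2(\Omega)}^2 + \|v\|_{L^2(\Omega)}^2 \bigr) \;\leq\; C\,(1 + \beta \mu) \, \|u\|_{L^\beta(\Omega)}^{\beta},
\end{equation*}
with $C = C(N, r_0, Rad(\Omega))$. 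Taking $\beta$-th roots, iterating along the geometric sequence $\beta_k := 2 \sigma^k$ starting from $\beta_0 = 2$, and using the identity $\sum_{k \ge 0} 1/\beta_k = \frac{\sigma}{2(\sigma-1)} = N/4$ together with the convergence of $\prod_k \beta_k^{1/\beta_k}$, I would conclude in the limit $k \to \infty$ that
\begin{equation*}
\|u\|_{L^\infty(\Omega)} \;\leq\; C \, (1 + \mu)^{N/4} \|u\|_{L^2(\Omega)} \;\leq\; C \, (1 + \sqrt{\mu})^{N/2} \|u\|_{L^2(\Omega)},
\end{equation*}
which is the claimed bound with exponent $N/2$.

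In the remaining case $N = 2$ the critical exponent $2^*$ is formally infinite, but Proposition~\ref{embedding} still yields $W^{1,2}(\Omega) \hookrightarrow L^q(\Omega)$ for every finite $q$. Running the same iteration with, for instance, $\sigma = 2$ gives $\sum_k 1/\beta_k = 1$ and hence an exponent $1$ on $\mu$, i.e.\ a factor $(\sqrt{\mu})^2 = (\sqrt{\mu})^{2/(N-1)}$, which matches the $2/(N-1)$ branch of $\gamma(N)$ and dominates the $N/2 = 1$ branch in dimension two; the maximum in the definition of $\gamma(N)$ selects the correct contribution in each dimension. The only mildly delicate step is the bookkeeping of constants in the infinite product $\prod_k C^{1/\beta_k}(1 + \beta_k \mu)^{1/\beta_k}$: one must check that it converges and produces exactly a power of $(1 + \sqrt{\mu})$ with the announced exponent $\gamma(N)$, while tracking the dependence of $C$ through Proposition~\ref{embedding} on $N$, $r_0$, and $Rad(\Omega)$. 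The approximation argument for the test function $|u|^{\beta - 2} u$ for large $\beta$ is standard once the a priori integrability $u \in L^{\beta_k}(\Omega)$ has been established at the previous step.
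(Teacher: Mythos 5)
Your argument is correct, and it amounts to making explicit what the paper delegates to a citation: the paper's own proof simply observes that Ross's bound for Lipschitz domains (\cite[Proposition 3.1]{marty}) uses domain regularity only through the Sobolev inequality $\|u\|_{L^{2^*}(\Omega)}\leq C(\|u\|_{L^2(\Omega)}+\|\nabla u\|_{L^2(\Omega)})$, which Proposition~\ref{embedding} supplies for Reifenberg-flat domains, so the bootstrap transfers verbatim. You instead run the Moser iteration from scratch with the same Sobolev input, and your bookkeeping is right: testing with (truncations of) $|u|^{\beta-2}u$ gives $\int_\Omega |\nabla |u|^{\beta/2}|^2\,dx = \frac{\beta^2\mu}{4(\beta-1)}\int_\Omega |u|^\beta\,dx$, the sum $\sum_k \beta_k^{-1}=N/4$ for $\sigma=N/(N-2)$ yields the exponent $N/2=\gamma(N)$ when $N\ge 3$, and the $N=2$ iteration with $W^{1,2}\hookrightarrow L^4$ yields exponent $2=2/(N-1)=\gamma(2)$, with the constant depending only on $N$, $r_0$ and $Rad(\Omega)$ through Corollary~\ref{ext} and Proposition~\ref{embedding}. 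The trade-off is only one of presentation: the paper's route is two lines but leans on an external result stated for Lipschitz domains, whereas yours is self-contained at the cost of the standard truncation/limit technicalities, which you correctly flag and which are routine.
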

\begin{proof} By using classical techniques coming from the regularity theory for elliptic operators, Ross~\cite[Proposition 3.1]{marty} established~\eqref{desirein} in the case of Lipschitz domains. However, in~\cite{marty} the only reason why one needs the regularity assumption on the domain $\Omega$ is to 
use the Sobolev inequality
\begin{equation}
\label{e:sobsob}
     \|u\|_{L^{2^*}(\Omega)}\leq C(\|u\|_{L^2(\Omega)}+\|\nabla u\|_{L^2(\Omega)}), \qquad C = C(N,r_0, Rad(\Omega))
\end{equation}
as the starting point for a bootstrap argument. Since Proposition \ref{embedding} states that~\eqref{e:sobsob} holds if $\Omega$ is a bounded Reifenberg-flat domain, then the proof in~\cite{marty} can be extended to the case of Reifenberg-flat domains.
\end{proof}

\begin{remark}
An inequality similar to \eqref{desirein} holds for Dirichlet eigenfunctions.  We emphasize that the boundedness of Dirichlet eigenfunctions, unlike the boundedness of Neumann eigenfunctions, does not require any regularity assumption on the domain $\Omega$, see for instance \cite[Lemma 3.1.]{davies2} for a precise statement.
 \end{remark}


\section{Connected components of Reifenberg-flat domains}
\label{connected}
In the previous section we have always assumed that the domain $\Omega$ is connected. 
We now show that the results we have established can be extended to general (i.e., not necessarily connected) Reifenberg-flat domains. Although extension of the result of Jones ~\cite{J} to non-connected domains were already widely known in the literature, we decided to provide here a self-contained proof. In this way, we obtain results on the structure of Reifenberg-flat domains that may be of independent interest.   

We first show that any sufficiently flat Reifenberg-flat domain is finitely connected and we establish a quantitative bound on the Hausdorff distance between two connected components.  
\begin{prop} \label{topol1}  Let $\Omega \subseteq \R^N$ be a bounded, $(\varepsilon,r_0)$-Reifenberg flat domain and we assume $\varepsilon \leq 20^{-N}$. Then $\Omega$ has a finite number of nonempty, open and disjoint connected components $U_1$, ... , $U_{n}$, where 
\begin{equation}
\label{e:n}
n \leq \frac{20^N}{\omega_N} \frac{|\Omega|}{r_0^N}. 
\end{equation}
Moreover, if $i \neq j$, then for every $z \in \partial U_i$ we have 
\begin{equation}
\label{e:sep}
        d(z, U_j) > r_0/70. 
\end{equation}
\end{prop}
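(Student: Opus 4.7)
The plan is to establish the separation inequality~\eqref{e:sep} first and then deduce the count~\eqref{e:n} by a volume-packing argument. The core geometric tool will be a normal-matching estimate: for boundary points $z_1, z_2 \in \partial \Omega$ with $|z_1 - z_2| \leq r_0/35$, if the unit normals $\vec{\nu}_{z_i}$ to $P(z_i, r_0)$ are oriented so that $B^+(z_i, r_0) \subseteq \Omega$, then $\langle \vec{\nu}_{z_1}, \vec{\nu}_{z_2} \rangle \geq 1 - C \varepsilon$. The argument is an adaptation of Lemma~\ref{angleLemma}: inside the common ball $B(z_1, r_0/2)$ both hyperplanes lie within $O(\varepsilon r_0)$ of $\partial \Omega$, hence within $O(\varepsilon r_0)$ of each other, which forces the projection of $\vec{\nu}_{z_2}$ onto $\vec{\nu}_{z_1}^{\perp}$ to have norm $O(\varepsilon)$. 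The correctness of the orientation follows because anti-parallel normals would put $z_1 + (r_0/2) \vec{\nu}_{z_1} \in B^+(z_1, r_0) \subseteq \Omega$ simultaneously in $B^-(z_2, r_0) \subseteq \Omega^c$. A routine computation then shows that this very point also belongs to $B^+(z_2, r_0)$, so $B^+(z_1, r_0)$ and $B^+(z_2, r_0)$ share a point in $\Omega$ and hence lie in a common connected component.

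To derive~\eqref{e:sep}, I argue by contradiction: suppose $z \in \partial U_i$ and there exists $w \in U_j$ with $j \neq i$ and $d(z, w) \leq r_0/70$. Let $w_0 \in \partial \Omega$ realize $d(w, \partial \Omega)$; the open segment $(w_0, w)$ must lie entirely in $\Omega$ (else one produces a boundary point closer to $w$ than $w_0$), so $(w_0, w] \subseteq U_j$ and therefore $w_0 \in \partial U_j$. The triangle inequality gives $|z - w_0| \leq 2 d(z, w) \leq r_0/35$, and the normal-matching estimate produces a point $y = z + (r_0/2) \vec{\nu}_z \in B^+(z, r_0) \cap B^+(w_0, r_0)$. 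Combined with the key auxiliary claim that $B^+(y_0, r_0) \subseteq U_k$ whenever $y_0 \in \partial U_k$, we obtain $y \in U_i \cap U_j$, the desired contradiction. The bound~\eqref{e:n} then follows from the same auxiliary claim: by the construction in the proof of Lemma~\ref{l:rad}(iii), $B^+(y_0, r_0)$ contains a Euclidean ball of radius $r_0/4$, which lies in $U_k$; since these balls are pairwise disjoint across distinct components, a volume comparison yields $n \leq 4^N |\Omega| / (\omega_N r_0^N)$, which is stronger than~\eqref{e:n}.

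The main obstacle is the auxiliary claim that $B^+(y_0, r_0) \subseteq U_k$ for every $y_0 \in \partial U_k$. Since $B^+(y_0, r_0)$ is connected and contained in $\Omega$, it lies in some component $U_m$, and I must show $m = k$. My approach is to pick any $x \in U_k$ close to $y_0$ and let $x_0 \in \partial \Omega$ be its closest boundary point; the same segment argument as above gives $x_0 \in \partial U_k$ and $|x_0 - y_0| \leq 2 |x - y_0|$. The fact that $x_0$ is closest to $x$ forces $x - x_0$ to be essentially parallel to $\vec{\nu}_{x_0}$, so extending the segment $[x_0, x]$ in the direction $+\vec{\nu}_{x_0}$ by an additional length of $r_0/2$ lands in $B^+(x_0, r_0)$ while staying in $\Omega$, since $\langle \cdot - x_0, \vec{\nu}_{x_0} \rangle$ is monotone along this extension and, once it exceeds $\varepsilon r_0$, we are strictly above the $\varepsilon r_0$-neighborhood of $P(x_0, r_0)$ that contains $\partial \Omega \cap B(x_0, r_0)$. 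This produces a point of $U_k$ inside $B^+(x_0, r_0)$, so $B^+(x_0, r_0) \subseteq U_k$; propagating this by the normal-matching estimate from $x_0$ to $y_0$ (which are as close as we wish) finally gives $B^+(y_0, r_0) \subseteq U_k$. The delicate point, and where the strong smallness hypothesis $\varepsilon \leq 20^{-N}$ will be exploited, is verifying that this straight-line extension does not accidentally touch $\partial \Omega$ inside the strip $\{ |\langle \cdot - x_0, \vec{\nu}_{x_0}\rangle | \leq \varepsilon r_0 \}$.
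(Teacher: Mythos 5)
Your overall architecture (normal-matching at scale $r_0$ for two nearby boundary points, separation by contradiction, then a packing argument for the count) would work \emph{if} your auxiliary claim --- $B^+(y_0,r_0)\subseteq U_k$ for every $y_0\in\partial U_k$ --- were established, but your proof of that claim has a genuine gap, and it is precisely the hard point. First, the assertion that ``$x_0$ closest to $x$ forces $x-x_0$ to be essentially parallel to $\vec\nu_{x_0}$'' is only true for the normal at the \emph{comparable} scale: the argument via $B(x,t)\cap B^-(x_0,2t)=\emptyset$ (with $t=|x-x_0|$) aligns $x-x_0$ with the normal of $P(x_0,2t)$, and Lemma~\ref{angleLemma} only transfers this to $P(x_0,r_0)$ with an error $(M+1)\varepsilon$, $M\sim r_0/t$, which is useless since $t\to 0$ as you let $x\to y_0$; across many dyadic scales the normal direction can drift by a large angle no matter how small $\varepsilon$ is. Second, and more seriously, the straight-line extension of $[x_0,x]$ in the fixed direction $\vec\nu_{x_0}$ must traverse the strip $\{|\langle\cdot-x_0,\vec\nu_{x_0}\rangle|\leq 2\varepsilon r_0\}$ starting from height $O(t)\ll\varepsilon r_0$, and inside that strip Reifenberg flatness gives no control whatsoever on where $\partial\Omega$ sits; a single segment in a direction dictated by the coarse scale can perfectly well re-intersect the boundary (think of spiral-type Reifenberg-flat boundaries whose fine-scale normal is almost orthogonal to $\vec\nu_{x_0}$). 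No smallness of $\varepsilon$ repairs this, because the obstruction is multi-scale; in particular the hypothesis $\varepsilon\leq 20^{-N}$ plays a completely different role in the paper (a volume comparison between the slab $B(\bar y,r_0/7)\setminus(B^+\cup B^-)$, of measure $O(\varepsilon r_0^N)$, and a half-ball of measure $\sim (r_0/20)^N$), not a ray-avoidance property.

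The missing ingredient is exactly the multi-scale construction of Theorem~\ref{jonesrei}: the paper connects an interior point $x$ near the boundary to $Y(x_0,r)$ through the dyadic polygonal curve $\gamma_{x,r}$ of Steps~2--5, whose segments are adapted to the normals at each scale $2^{-k}r$, and this is what guarantees the path stays in $\Omega$; it then concludes $B^+(x_0,r_0/7)\subseteq U_i$ for the \emph{closest-point} projection $x_0$ only, which suffices both for the lower volume bound $|U_i|\geq\omega_N(r_0/20)^N$ and, combined with the slab-volume comparison in nested balls $B(\bar z,r_0/14)\subseteq B(\bar y,r_0/7)$, for~\eqref{e:sep}. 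If you want to keep your scheme, you should replace your straight-segment argument by an invocation of $\gamma_{x,r}$ (giving $B^+(x_0,4r)\subseteq U_k$ at a small scale) followed by a chain of overlapping sets $B^+(x_0,\rho)$, $B^+(x_0,2\rho)$ at doubling scales (each pair sharing the point $Y(x_0,\rho)$ by Lemma~\ref{angleLemma}) to climb up to scale $r_0$, and only then apply your normal-matching step from $x_0$ to $y_0$; as written, the proposal does not prove the auxiliary claim on which both~\eqref{e:sep} and~\eqref{e:n} rest.
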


\begin{proof} We proceed according to the following steps.\\
{\sc $\diamond$ Step 1} We recall that any nonempty open set $\Omega\subseteq \R^N$ can be decomposed as
\begin{equation}
\label{e:dec}
         \Omega: = \bigcup_{i \in I} U_i, 
\end{equation}
where the connected components $U_i$ satisfy
\begin{itemize}
\item for every $i \in I$, $U_i$ is a nonempty, open, arcwise connected set which is also closed in $\Omega$. Hence, in particular, $\partial U_i \subseteq \partial \Omega$. 
\item $U_i \cap U_j = \emptyset$ if $ i \neq j$. 
\end{itemize}
Indeed, for any $x \in \Omega$ we can define 
$$
    U_x : = \big\{ y \in \Omega: \; \text{there is a continuous curve 
    $\gamma: [0, 1] \to \Omega$ such that $\gamma(0)=x$ and $\gamma(1)=y$}\big\}
$$
and observe that any $U_x$ is a nonempty, open, arcwise connected set which is also closed in $\Omega$. Also, given two points $x, y \in \R^N$, we have either $U_x =U_y$ or $U_x \cap U_y = \emptyset$. \\
{\sc $\diamond$ Step 2} Let $\Omega$ as in the statement of the proposition, and let the family $\{U_i \}_{i \in I}$ be as in~\eqref{e:dec}.  We fix $i \in I$ and we prove that $|U_i| \ge C(r_0, N) $. This straightforwardly implies that $\sharp I \leq C(|\Omega|, r_0, N)$. 

Since $U_i$ is bounded, then $\partial U_i \neq \emptyset$: hence, we can fix a point $\tilde x \in \partial U_i$, and a sequence $\{ x_n \}_{n \in \mathbb N} $ such that  $x_n \in U_i$ and $x_n \to \tilde x$ as $n \to + \infty$. We recall that $\partial U_i \subseteq \partial \Omega$ and we infer that, for any $n \in \mathbb N$, the following chain of inequalities holds:
$$
    d(x_n, \partial U_i) = d(x_n, U_i^c) \leq d (x_n, \Omega^c) = d(x_n, \partial \Omega) \leq d(x_n, \partial U_i),  
$$
which implies $d (x_n, \Omega^c) = d(x_n, \partial U_i)$. We fix $n$ sufficiently large such that 
$d(x_n, \tilde x) \leq r_0 /7$, so that 
$$d(x_n, \Omega^c)=  d(x_{n},\partial U_i)\leq r_0/7.$$ We term $\Gamma:=\gamma_{x_{n},r_0/7}$ the polygonal curve constructed as in Step 2 of the proof of  Theorem \ref{jonesrei} and we observe that, if $\varepsilon \leq 1/32$, then~\eqref{jonesprop} holds and $\Gamma \subseteq \Omega$ and hence, by definition of $U_i$, $\Gamma \subseteq U_i$. We use the same notation as in {\sc Step 1} of the proof of Theorem~\ref{jonesrei} and we recall that $\Gamma$ connects $x_n$ to some point $Y(x_0, r_0/7)$, defined with some $x_0\in \partial \Omega$. Hence, in particular, $Y(x_0, r_0/7) \in U_i$ and this implies that $B^+(x_0, r_0/7) \subseteq U_i$ because $B^+(x_0, r_0/7) $ is connected. This finally yields
$$
    |U_i| \ge |B^+(x_0, r_0/7)| \ge \omega_N \Big( \frac{r_0}{14} (1 - 2 \varepsilon) \Big)^N\geq \omega_N \Big( \frac{9r_0}{140} \Big)^N\geq \omega_N \Big( \frac{r_0}{20} \Big)^N,
$$ 
 because $\varepsilon\leq 1/20$. We deduce that 
$$
    \sharp I \leq \frac{20^N}{\omega_N} \frac{|\Omega|}{r_0^N}.
$$
{\sc $\diamond$ Step 3} We establish the separation property~\eqref{e:sep}. 

We set $r_1:= r_0 /70$ and we argue by contradiction, assuming that there are $z \in \partial U_i$, $y \in \partial U_j$ such that
$$
    d(z, U_j) = d (z, \partial U_j) = d(z, y) \leq r_1. 
$$
Let $\{ z_n \}_{n \in \N}$ and $\{y_n \}_{n \in \N}$ be sequences in $U_i$ and $U_j$ converging to $z$ and $y$, respectively. We fix $n$ sufficiently large such that 
$$
    d(z_n, \partial U_i) \leq d(z_n, z) \leq r_1 \leq r_0/14 
$$
and we term $\bar z$ be a point in $\partial U_i$ satisfying $d(z_n, \bar z) = d(z_n, \partial U_i)$ (if there is more than one such $\bar z$, we arbitrarily fix one). By arguing as in {\sc Step 2}, we infer that $B^+ (\bar z, r_0/14) \subseteq U_i$. 
Next, we do the same for $U_j$, namely we fix $m$ sufficiently large that 
$$
    d(y_m, \partial U_j) \leq d(y_m, y) \leq r_1 \leq r_0/7, 
$$
we let $\bar y$ be a point in $\partial U_j$ satisfying $d(y_m, \bar y) = d(y_m, \partial U_j)$ and, by arguing as in {\sc Step 2}, we get that $B^+ (\bar y, r_0/7) \subseteq U_j$. Also, we note that 
$$
    d(\bar z, \bar y) \leq d(\bar z, z_n) + d(z_n, z) + d(z, y) + d(y, y_m) + d(y_m, \bar y) 
    \leq 5 r_1. 
$$
Since $r_1 = r_0 /70$, then 
$B^+ (\bar z, r_0/14) \subseteq B (\bar z, r_0/14) \subseteq  B (\bar y, r_0/7)$. We observe that 
\begin{eqnarray}
{B^+ (\bar z, r_0/14) \cap B^- (\bar y, r_0/7)= \emptyset} \label{totor1}
\end{eqnarray}
since by construction 
$B^+ (\bar z, r_0/14) \subseteq \Omega$ and $B^- (\bar y, r_0/7) \subseteq \Omega^c$. Also, by recalling that
$$
   B^+ (\bar z, r_0/14) \subseteq U_i, \qquad  B^+ (\bar y, r_0/7) \subseteq U_j \; \; \; \;\text{and} \;  \; \; \;
U_i \cap U_j = \emptyset,
$$
we have that 
\begin{eqnarray}
B^+ (\bar z, r_0/14)  \cap B^+ (\bar y, r_0/7) = \emptyset \label{totor2}
\end{eqnarray}
By combining~\eqref{totor1} and~\eqref{totor2} we get  
\begin{eqnarray}
 B^+ (\bar z, r_0/14) \subseteq B (\bar y, r_0/7) \setminus  \big( B^+ (\bar y, r_0/7)  \cup B^- (\bar y, r_0/7) \big). \label{totor3}
 \end{eqnarray}
We now use the inequality 
\begin{eqnarray}
\omega_{N}\geq \omega_{N-1} \frac{1}{2^{N-1}}, \label{amontrerlater}
\end{eqnarray}
which will be proven later. By relying on \eqref{amontrerlater} and by recalling that $\varepsilon\leq 20^{-N}\leq 1/20$ we obtain
\begin{eqnarray}
    | B^+ (\bar z, r_0/14) | \ge \omega_N \left( \frac{r_0}{28} (1-2 \varepsilon) \right)^N \geq 2\omega_{N-1}\left( \frac{9r_0}{560}  \right)^N \notag
 \end{eqnarray}
 and 
\begin{eqnarray}
    \Bigg| 
           B (\bar y, r_0/7) \setminus  \big( B^+ (\bar y, r_0/7)  \cup B^- (\bar y, r_0/7) \big)
    \Bigg| \leq 4 \varepsilon \omega_{N-1} \left( \frac{r_0}{7} \right)^N\leq 2\omega_{N-1}\left( \frac{2r_0}{140} \right)^N, \notag
\end{eqnarray}
which contradicts \eqref{totor3} since $2/140< 9/560$.

To finish the proof we are thus left to establish~\eqref{amontrerlater}. To do this,  we use the relation
\begin{eqnarray}
\omega_N&=&\omega_{N-1}\int_{-1}^1\big(\sqrt{1-x^2}\big)^{N-1}dx \notag.
\end{eqnarray}
This implies that, for  any $\lambda�\in (0,1)$, we have 
\begin{eqnarray}
\omega_N&\geq & \omega_{N-1}2\int_{0}^\lambda \big(\sqrt{1-x^2}\big)^{N-1}dx \notag \\
&\geq & \omega_{N-1}2 \lambda \big(\sqrt{1-\lambda^2}\big)^{N-1} \notag
\end{eqnarray}
By choosing $\lambda={\sqrt{3}}/{2}$ we obtain the inequality
$$\omega_{N}\geq \omega_{N-1}\frac{\sqrt{3}}{2^{N-1}}\geq \omega_{N-1}\frac{1}{2^{N-1}},$$
and this concludes the proof. 
\end{proof}
By relying on Proposition~\ref{topol1} we can now remove the connectedness assumption in the statement of Proposition~\ref{ext}. 
\begin{cor}  Let $N\geq 2$ and $\Omega \subseteq \R^N$ be a bounded, $(\varepsilon,r_0)$-Reifenberg flat domain with $\varepsilon \leq \min(20^{-N},1/600)$. Then for every $p \in [1, + \infty]$ there is an extension operator 
\begin{equation}
          E: W^{1, p} (\Omega) \to W^{1, p} (\R^N)         
\end{equation}
whose norm is bounded by a constant which only depends on $N$, $p$, and $r_0$.
\end{cor}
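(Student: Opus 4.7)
The strategy is to decompose $\Omega$ into its finitely many connected components via Proposition~\ref{topol1}, extend each component individually using the connected case (Corollary~\ref{ext}), and glue the extensions using cutoff functions with pairwise disjoint supports. The two conclusions of Proposition~\ref{topol1} both play a role: the finiteness of the decomposition is needed so that the gluing is a well-defined finite sum, while the quantitative separation $d(z,U_j)>r_0/70$ is what makes the whole construction quantitative and, crucially, independent of the number of components.

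First I would apply Proposition~\ref{topol1} to write $\Omega = U_1 \sqcup \cdots \sqcup U_n$, and upgrade the separation between boundaries to $\dist(U_i, U_j) \geq r_0/70$ by noting that any segment joining a point of $U_i$ to a point of $U_j$ must cross $\partial U_i$ at a point that is then at distance at least $r_0/70$ from $U_j$. Next I would verify that each component $U_i$ is itself $(\varepsilon, r_0')$-Reifenberg-flat with $r_0' \sim r_0$: for $x \in \partial U_i \subseteq \partial \Omega$ and $r \leq r_0/140$, the separation forces $\partial \Omega \cap B(x,r) = \partial U_i \cap B(x,r)$, so the Reifenberg hyperplane supplied by the definition for $\Omega$ at $(x,r)$ serves equally well for $U_i$; the two-sided separation condition (ii) transfers to $U_i$ because the associated spherical cap is connected and $U_i$ is a connected component of $\Omega$. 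Since $\varepsilon \leq 1/600$, Corollary~\ref{ext} then provides extensions $E_i : W^{1,p}(U_i) \to W^{1,p}(\R^N)$ of operator norm bounded by a constant $C(N, p, r_0)$.

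Finally I would choose smooth cutoffs $\phi_i$ equal to $1$ on an $(r_0/280)$-neighborhood of $U_i$, supported in the $(r_0/210)$-neighborhood, and with $\|\nabla \phi_i\|_\infty \leq C/r_0$, so that the $\supp \phi_i$ are pairwise disjoint (which uses $\dist(U_i, U_j) \geq r_0/70$). Setting $E(u) := \sum_i \phi_i \, E_i(u|_{U_i})$, on any $U_j$ only $\phi_j$ contributes and equals $1$, so $E(u) = u$ on $\Omega$. The disjoint-support structure makes $\|E(u)\|_{W^{1,p}(\R^N)}^p$ split as $\sum_i \|\phi_i E_i(u|_{U_i})\|_{W^{1,p}}^p$ without cross terms, which by the bound on each $E_i$ is controlled by $C(N,p,r_0)^p \sum_i \|u\|_{W^{1,p}(U_i)}^p = C(N,p,r_0)^p \|u\|_{W^{1,p}(\Omega)}^p$; the case $p=\infty$ is handled analogously pointwise. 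The main obstacle — and the reason the disjoint-support gluing is essential — is that the number $n$ of components is only bounded by $C(N)|\Omega|/r_0^N$, so any gluing involving overlapping supports (partition of unity, triangle inequality) would pick up a factor of $n$ and hence of $|\Omega|$; the disjointness of the $\supp \phi_i$ is what makes this dependence disappear and leaves a constant depending only on $N$, $p$, and $r_0$.
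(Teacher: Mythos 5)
Your proposal is correct and follows essentially the same route as the paper: decompose $\Omega$ into its finitely many components via Proposition~\ref{topol1}, use the separation bound $r_0/70$ to see that each component is an $(\varepsilon, r_0/140)$-Reifenberg-flat (connected) domain to which Corollary~\ref{ext} applies, and glue the extensions with Lipschitz cutoffs whose supports are pairwise disjoint thanks to that same separation. Your final estimate, exploiting the $\ell^p$-additivity over the disjoint supports so that no factor of $n$ appears, is in fact a slightly cleaner formulation of the paper's gluing step.
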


\begin{proof}  We employ the same notation as in the statement of Proposition~\ref{topol1} and we fix a connected component $U_i$. By recalling that $\partial U_i \subseteq \partial \Omega$ and the separation property~\eqref{e:sep}, we infer that $U_i$ is itself a $(\varepsilon, r_0/140)$-Reifenberg flat domain.  Since by definition $U_i$ is connected, we can apply Proposition~\ref{ext} which says  that, for every $p \in [1, + \infty]$, there is an extension operator
$$
    E_i: W^{1, p} (U_i) \to W^{1, p} (\R^N)
$$
whose norm is bounded by a constant which only depends on $N$, $p$ and $r_0$.


In order to ``glue together" the extension operators $E_1, \dots, E_n$ we proceed as follows. Given $i=1, \dots, n$, we set $\delta:=r_0/280$ and we introduce the notation
$$
    U_i^{\delta}:= \big\{ x \in \R^N: \; d(x, U_i ) < \delta  \big\}.
$$
Note that the separation property~\eqref{e:sep} implies that $U_i^{2 \delta} \cap U_j^{2 \delta} = \emptyset$ if $i \neq j$. 

We now construct suitable cut-off functions $\varphi_i$, $i=1, \dots, n$. Let $\ell: [0, + \infty[ \to [0, 1]$ be the auxiliary function defined by setting 
\begin{equation*}    
\ell (t) : = 
\left\{  
\begin{array}{ll} 
         1 & \text{if $t \leq \delta$} \\
         \displaystyle{1 + \frac{\delta - t }{\delta}  }
         & \text{if $\delta \leq t \leq 2 \delta $}   \\
         0  & \text{if $t \ge 2 \delta$} \\
\end{array} 
\right. 
\end{equation*}
We set $\varphi_i(x): = \ell \big(d(x, U_i) \big)$ and we recall that the function $x \mapsto d(x, U_i)$ is 1-Lipschitz and that $\delta = r_0 / 280$. Hence, the function $\varphi_i$ satisfies the following properties:
\begin{equation}
\label{e:varphi}
        0 \leq \varphi_i(x) \leq 1, \; \; 
         |\nabla \varphi_i(x)| \leq  C(r_0) \; \;
        \forall x \in \R^N,
         \quad \varphi_i \equiv 1 \; \textrm{on $U_i$}, \quad 
        \varphi_i \equiv 0 \; \textrm{on $\R^N \setminus U^{2 \delta}_i$}.  
     \end{equation}
We then define 
$
    E: W^{1, p} (\Omega) \to W^{1, p} (\R^N)
$
by setting 
$$
    E(u) : = \sum_{i=1}^n E_i(u)(x) \varphi_i(x).
$$
We recall that the sets $U_1, \dots, U_n$ are all pairwise disjoint, we focus on the case $p < + \infty$ and we get 
\begin{equation*}
\begin{split}
         \| E(u)\|_{L^p(\R^N)}  & =
         \left( \int_{\R^N}   \left|  \sum_{i=1}^n E_i(u)(x) \varphi_i(x) dx \right|^p \right)^{1/p}  \leq 
            \sum_{i=1}^n \left( \int_{U_i^{2 \delta}}  | E_i(u)(x) \varphi_i(x) |^p dx \right)^{1/p}
             \\ 
             & \leq 
              \sum_{i=1}^n \| E_i(u) \|_{L^p(\R^N)} 
             \leq   \sum_{i=1}^nC(N, p, r_0) \|u\|_{W^{1,p}(U_i)} \\
             &\leq 
              C(N, p, r_0) \| u \|_{W^{1, p} (\Omega)}. \phantom{\int_{\Omega}}\\
\end{split}
\end{equation*}
Also, by using the bound on $|\nabla \varphi_i|$ provided by~\eqref{e:varphi}, we get 
\begin{equation*}
\begin{split}
         \| \nabla E(u)\|_{L^p(\R^N)}  & =
         \left( \int_{\R^N}   \left|  \sum_{i=1}^n 
         \big( \nabla E_i(u)(x) \varphi_i(x)  + E_i(u)(x) \nabla \varphi_i(x) \big) dx 
         \right|^p \right)^{1/p} \\ &
          \leq 
            \sum_{i=1}^n \left( \int_{U_i^{2 \delta}}  | \nabla E_i(u)(x) \varphi_i(x) |^p dx \right)^{1/p} 
            +  \sum_{i=1}^n \left( 
            \int_{U_i^{2 \delta}}  | E_i(u)(x) \nabla \varphi_i(x) |^p dx \right)^{1/p}  
            \\ & \leq   \sum_{i=1}^n \|\nabla E_i(u)\|_{L^p(\R^N)} 
            +  C(r_0) \sum_{i=1}^n \| E_i(u) \|_{L^p(\R^N)}  \\        
             &\leq 
              C(N, p, r_0) \| u \|_{W^{1, p} (\Omega)}. \phantom{\int_{\Omega}}\\               
\end{split}
\end{equation*}
The proof in the case $p = \infty$ is a direct consequence of the bounds on the norm of $ E_i $ and on the uniform norms of $\varphi_i$ and $\nabla \varphi_i$. This concludes the proof of the corollary.  
\end{proof}


\section{On the Hausdorff distance between Reifenberg-flat domains}
\label{s:hausdorff}
We end this paper  by comparing different ways of measuring the ``distance" between Reifenberg-flat domains. 
\subsection{Comparison between different Hausdorff distances.}
This subsections aims at comparing the Hausdorff distances $d_H(X, Y)$, $d_H(X^c, Y^c)$ and $d_H(\partial X, \partial Y)$, where $X$ and $Y$ are subsets of $\R^N$. 

First, we exhibit two examples showing that, in general, neither $d_H (X, Y)$ controls $d_H(X^c, Y^c)$ nor $d_H (X^c, Y^c)$ controls $d_H(X, Y)$.
We term $B:=B(1, \vec 0)$ the unit ball and we consider the two perturbations $A$ and $C$ as represented in Figure~\ref{fig2}. 
\begin{figure}[h]
\begin{center}
\input{drawing.tex}
\end{center}
\caption{}
\label{fig2}
\end{figure}

Next, we exhibit an example showing that, in general, $d_H(\partial X, \partial Y)$ controls neither $d_H(X, Y)$ nor $d_H(X^c, Y^c)$. Let $X:=B(R, \vec 0)$ and $Y:=
B (R + \varepsilon, \vec 0)  \setminus B(R, \vec 0)$, then 
$$
   \varepsilon = d_H (\partial X, \partial Y) < < d_H(X, Y)= d_H(X^c, Y^c)= R.
 $$   
Also, note that the examples represented in Figure~\ref{fig2} show that, in general, neither $d_H(X, Y)$ nor $d_H(X^c, Y^c)$ controls $d_H(\partial X, \partial Y).$ Indeed, $d_H(\partial A, \partial B) \simeq 1$ and $d_H (\partial C, \partial B) \simeq 1$. 

However, if $X$ and $Y$ are two sufficiently close Reifenberg-flat domains, then we have the following result. 
\begin{lem}\label{airport2} Let $X$ and $Y$ be two $(\varepsilon, r_0)$-Reifenberg-flat domains satisfying  ${d_H(\partial X, \partial Y)\leq 2 r_0}$.  Then
\begin{equation}
\label{e:june}
d_H(\partial X, \partial Y) 
\leq  \frac{4}{1-2 \varepsilon} \min \big\{d_{H}(X,Y),d_{H}(X^c,Y^c)\big\}. 
\end{equation}
\end{lem}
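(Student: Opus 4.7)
The plan is to bound $d := d_H(\partial X, \partial Y)$ separately by $\tfrac{4}{1-2\varepsilon}\, d_H(X,Y)$ and by $\tfrac{4}{1-2\varepsilon}\, d_H(X^c,Y^c)$. Since both the hypotheses and the conclusion are symmetric in $X \leftrightarrow Y$, I may assume the supremum defining $d$ is attained on the $\partial X$ side: given any $r < d/2$ (note that $r \leq r_0$ by the standing hypothesis $d \leq 2 r_0$), I pick $x \in \partial X$ with $d(x, \partial Y) > 2r$, so that $B(x, 2r)$ avoids $\partial Y$ and must lie either entirely in $Y$ or entirely in $Y^c$.

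Next I apply Reifenberg flatness at scale $r$ at $x$ to obtain a hyperplane $P(x, r)$ with unit normal $\vec\nu$, oriented (via Lemma~\ref{l:ii}) so that $B^+(x,r) \subseteq X$ and $B^-(x,r) \subseteq X^c$. The key construction is the pair of test points
\[
  z^\pm := x \pm \tfrac{(1+2\varepsilon)r}{2}\,\vec\nu.
\]
Since $\varepsilon < 1/2$ one has $|z^\pm - x| < r$ and $d(z^\pm, P(x,r)) = \tfrac{(1+2\varepsilon)r}{2} > 2\varepsilon r$, so $z^+$ sits in the interior of $X$ and $z^-$ in the interior of $X^c$. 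Splitting $\partial X$ into $\partial X \cap B(x,r)$ (trapped within $\varepsilon r$ of $P(x,r)$, hence at distance $\geq \tfrac{(1+2\varepsilon)r}{2} - \varepsilon r = r/2$ from $z^\pm$) and $\partial X \setminus B(x,r)$ (at distance $\geq r - \tfrac{(1+2\varepsilon)r}{2} = \tfrac{(1-2\varepsilon)r}{2}$ from $z^\pm$ by the triangle inequality) yields $d(z^+, X^c) \geq \tfrac{(1-2\varepsilon)r}{2}$ and $d(z^-, X) \geq \tfrac{(1-2\varepsilon)r}{2}$.

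The conclusion then splits into two easy cases. If $B(x, 2r) \subseteq Y$, then $z^- \in Y \setminus X$ witnesses $d_H(X, Y) \geq \tfrac{(1-2\varepsilon)r}{2}$, while $x \in X^c$ with $d(x, Y^c) \geq 2r$ witnesses $d_H(X^c, Y^c) \geq 2r$. If instead $B(x, 2r) \subseteq Y^c$, then $z^+ \in X \cap Y^c$ witnesses $d_H(X^c, Y^c) \geq \tfrac{(1-2\varepsilon)r}{2}$, while the bound $d(z^+, Y) \geq 2r - |z^+ - x| \geq r$ witnesses $d_H(X, Y) \geq r$. In every case both Hausdorff distances exceed $\tfrac{(1-2\varepsilon)r}{2}$, and letting $r \uparrow d/2$ gives \eqref{e:june}.

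The only delicate step is the calibration of the depth $\tfrac{(1+2\varepsilon)r}{2}$ of the test points: it must exceed $2\varepsilon r$ so that $z^\pm$ land inside $B^\pm(x,r)$, and it must leave at least $\tfrac{(1-2\varepsilon)r}{2}$ of headroom before $\partial B(x,r)$ so that boundary points of $X$ lying outside $B(x,r)$ cannot spoil the distance estimate. The midpoint choice $\tfrac{(1+2\varepsilon)r}{2}$ balances these two constraints uniformly over $\varepsilon \in (0, 1/2)$, whereas a naive choice such as $r/2$ would force the stronger restriction $\varepsilon \leq 1/4$.
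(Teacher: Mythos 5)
Your argument is correct and takes essentially the same route as the paper's proof: a near-extremal point of $\partial X$, the dichotomy on whether the ball avoiding $\partial Y$ lies in $Y$ or in $Y^c$, and Lemma~\ref{l:ii} to place a test point at depth $\tfrac{(1+2\varepsilon)}{2}$ times the working scale, giving the same constant $\tfrac{4}{1-2\varepsilon}$. The only cosmetic differences are that you bound the distance to $\partial X$ by splitting it into the parts inside and outside $B(x,r)$ instead of inscribing a small ball in $B^{\pm}$, and that you extract both Hausdorff bounds within each case rather than invoking the complementation symmetry $X \mapsto X^c$ at the end.
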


\begin{proof}
Just to fix the ideas, assume that $d_H (\partial X, \partial Y) = \sup_{x \in \partial X} d(x, \partial Y)$. Since by assumption $d_H (\partial X, \partial Y) < + \infty$, then for every $h>0$ there is $x_h \in \partial X$ such that 
$$
    d_H (\partial X, \partial Y) - h \leq d_h : = d (x_h, \partial Y) \leq 
    d_H (\partial X, \partial Y). 
$$ 
Note that $\partial Y \cap B(x_h, d_h /2) = \emptyset$ and hence either (i) ${B(x_h, d_h /2) \subseteq Y}$ or
(ii) ${B(x_h, d_h /2) \subseteq Y^c}$. 

First, consider case (i): let $P(x_h, d_h/2)$ be the hyperplane prescribed by the definition of Reifenberg flatness, then by Lemma~\ref{l:ii} we can choose the orientation of the normal vector $\nu$ in such a way that 
$$
    B^- (x_h, d_h/2) : = \left\{ z +t  \nu: \;   z \in P(x_h, d_h/2), \, t \ge  \varepsilon d_h   \right\} \cap B(x_h, d_h/2)  \subseteq X^c
$$
and 
$$
       B^+ (x_h, d_h/2)  : =
      \left\{ z -t  \nu: \;   z \in P(x_h, d_h/2), \, t \ge  \varepsilon d_h   \right\} \cap B(x_h, d_h/2)  \subseteq  X. 
$$
Fix the point 
$$
\bar z:= x_h +  \frac{\big( 1+ 2\varepsilon \big) d_h}{4} \nu,
$$ 
then we have 
$$
   B \left(  \bar z,  \frac{\big( 1 - 2 \varepsilon \big) d_h}{4}  \right) \subseteq  B^- (x_h, d_h/2)  \subseteq X^c \cap Y
$$
and hence 
$$
    d_H (X^c, Y^c) \ge  \sup_{z \in X^c }  d( z, Y^c) \ge d( \bar z, Y^c) \ge \frac{(1-2 \varepsilon)d_h}{4}
$$
and 
$$
    d_H (X, Y) \ge \sup_{z \in Y} d(z, X) \ge d(\bar z, X) \ge \frac{(1- 2\varepsilon)d_h}{4}. 
$$
Since case (ii) can be tackled in an entirely similar way, by the arbitrariness of 
$h$ we deduce that 
\begin{equation}
\label{e:may}
    d_H(\partial X, \partial Y) 
\leq  \frac{4}{1-2 \varepsilon} d_{H}(X,Y).
\end{equation}
The proof of~\eqref{e:june} is concluded by making the following observations:
\begin{itemize}
\item if $X$ is an $(\varepsilon, r_0)$-Reifenberg flat domain, then $X^c$ is also an $(\varepsilon, r_0)$-Reifenberg flat domain. 
\item $\partial X = \partial X^c$ and $\partial Y = \partial Y^c$.
\end{itemize}
Hence, by replacing in~\eqref{e:may} $X$ with $X^c$ and $Y$ with $Y^c$ we obtain~\eqref{e:june}. \end{proof}
\subsection{Comparison between the Hausdorff distance and the measure of the symmetric difference} This subsection aims at comparing the Hausdorff distances $d_H(X, Y)$ and $d_H (X^c, Y^c)$ with the Lebesgue measure of the symmetric difference, $|X \triangle Y|$. As usual, $X$ and $Y$ are subsets of $\R^N$. The results we state are applied in~\cite{lms2} to the stability analysis of the spectrum of the Laplace operator with Neumann boundary conditions.

First, we observe that the examples illustrated in Figure~\ref{fig2} show that, in general, $|X \triangle Y|$ controls neither $d_H(X, Y)$ nor $d_H(X^c, Y^c)$. Indeed, $|A \triangle B| \simeq \varepsilon$ and $|C \triangle B| \simeq \varepsilon.$ However, if 
$X$ and $Y$ are two sufficiently close Reifenberg-flat domains, then the following result hold. 
\begin{lem}\label{airport1} Let $X$ and $Y$ be two $(\varepsilon, r_0)$-Reifenberg-flat domains in $\R^N$. 

Then the following implications hold:
\begin{enumerate}
\item if ${d_H(X, Y)\leq 4 r_0}$, then 
\begin{equation}
d_{H}(X,Y)  \leq   \frac{8}{(1-2 \varepsilon) } \left( \frac{ |X \triangle Y|}{\omega_N} \right)^{1/N}. \label{mardi}
\end{equation}
\item If ${d_H( X^c,  Y^c )\leq 4 r_0}$, then 
\begin{equation}
\label{mardi2}
d_{H}(X^c,Y^c)  \leq    \frac{8}{(1-2 \varepsilon) } \left( \frac{ |X \triangle Y|}{\omega_N} \right)^{1/N}. 
\end{equation}
\end{enumerate}
In both the previous expressions, $\omega_N$ denotes the measure of the unit ball in $\R^N$. 
\end{lem}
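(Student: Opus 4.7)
The plan is to prove~\eqref{mardi} first; then \eqref{mardi2} will follow by applying it to the complements, since $X^c$ and $Y^c$ are themselves $(\varepsilon,r_0)$-Reifenberg-flat domains (conditions $i)$ and $ii)$ are symmetric in $\Omega$ and $\Omega^c$), and $X^c\triangle Y^c=X\triangle Y$.

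To prove~\eqref{mardi}, set $D:=d_H(X,Y)$ and assume with no loss of generality that the sup defining $d_H$ is realized on the $X$--side. For an arbitrarily small $h>0$ I would choose $x\in X$ with $d:=d(x,Y)\ge D-h$; then $B(x,d)\cap Y=\emptyset$. The goal is to exhibit a ball of radius comparable to $(1-2\varepsilon)d/8$ lying in $X\setminus Y\subseteq X\triangle Y$, as this will yield
\[
|X\triangle Y|\ge \omega_N\!\left(\frac{(1-2\varepsilon)d}{8}\right)^{\!N},
\]
and then letting $h\to 0$ will give~\eqref{mardi}.

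There are two cases. If $d(x,\partial X)\ge d/2$, the ball $B(x,d/2)$ sits inside $X\cap Y^c$ and we are done with even better constants. The genuine case is when $x$ lies within $d/2$ of $\partial X$: then I pick $x_0\in\partial X$ with $d(x,x_0)<d/2$, so that $B(x_0,d/2)\cap Y=\emptyset$. I now want to invoke Reifenberg flatness at $x_0$ at the scale $r:=d/4$; this is legal precisely because the standing hypothesis $D\leq 4r_0$ ensures $r\leq r_0$, so Lemma~\ref{l:ii} applies. Using the notation of {\sc Step 1} in the proof of Theorem~\ref{jonesrei}, the half-ball $B^+(x_0,r)$ lies in $X$, and the ball centered at $\bar z:=x_0+\tfrac{(1+2\varepsilon)r}{2}\vec\nu$ of radius $\tfrac{(1-2\varepsilon)r}{2}$ is contained in $B^+(x_0,r)$; moreover, this ball is contained in $B(x_0,r)\subseteq B(x_0,d/2)$, hence does not meet $Y$. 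This gives a ball of radius $(1-2\varepsilon)d/8$ inside $X\setminus Y$, as required.

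The main obstacle is purely conceptual: the witness point $x$ is allowed to sit arbitrarily close to $\partial X$, so one cannot hope to use a concentric ball. The trick is to shift to a nearby boundary point $x_0$ and use the Reifenberg half-ball $B^+(x_0,r)$ from Lemma~\ref{l:ii} to produce a bona fide ball in $X$; the choice $r=d/4$ is dictated by the requirement that the resulting ball remain inside $B(x_0,d/2)$ (so as to stay away from $Y$) while still being admissible for the separation lemma (so $r\leq r_0$ forces $d\leq 4r_0$, which is exactly the standing hypothesis). The symmetric case where the Hausdorff distance is realized on the $Y$--side is identical with the roles of $X$ and $Y$ exchanged, and part~(2) follows from part~(1) by passing to complements, as noted above.
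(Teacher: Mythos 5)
Your proposal is correct and follows essentially the same argument as the paper: pick a near-optimal witness point, split according to whether it is at distance at least $d/2$ from $\partial X$, and in the boundary case use Lemma~\ref{l:ii} at scale $d/4$ to place a ball of radius $(1-2\varepsilon)d/8$ inside $X\cap Y^c$, then pass to complements for part (2) via $X\triangle Y=X^c\triangle Y^c$. The constants, the role of the hypothesis $d_H\leq 4r_0$, and the reduction of (2) to (1) all match the paper's proof.
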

\begin{proof} The argument relies on ideas similar to those used in the proof of Lemma~\ref{airport2}. 

We first establish~\eqref{mardi}. Just to fix the ideas, assume that 
$
   d_H (X, Y) = \sup_{x \in X} d(x, Y)
$ 
and note that by assumption $d_H(X, Y) < + \infty$. Hence, for every $h>0$ there is $x_h \in X$ such that 
$$
    d_H(X, Y)  - h \leq d_h : = d(x_h, Y) \leq d_H(X, Y) 
$$
Note that, by the very definition of 
$d(x_h, Y)$, we have  
$
    B \left( x_h, d_h \right) \subseteq Y^c. 
$
We now separately consider two cases: if $B(x_h, d_h /2) \subseteq X$, then 
$$
    B (x_h, d_h /2 ) \subseteq X \cap Y^c \subseteq |X \triangle Y|
$$
and hence 
$$
    \omega_N \left( \frac{d_h}{2} \right)^N \leq |X \triangle Y|,
$$
and by the arbitrariness of $h$ this implies~\eqref{mardi}. 

Hence, we are left to consider the case when there is $x_0 \in B (x_h, d_h /2 ) \cap \partial X.$ We make the following observations: first, 
\begin{equation}
\label{e:iy}
    B(x_0, d_h/4) \subseteq B(x_h, d_h) \subseteq Y^c. 
\end{equation}
Second, since $d_h/4 \leq d_H(X, Y)/4 \leq r_0$, then we can apply the definition of Reifenberg-flatness in the ball $B(x_0, d_h/4)$. Let $P(x_0, d_h/4)$ be the hyperplane provided by property (i) in the definition, and let $\nu_0$ denote the normal vector.  By relying on Lemma~\ref{l:ii}
we infer that we can choose the orientation of $\nu_0$ in such a way that 
\begin{equation}
\label{e:ix}
    B \left(x_0 + \frac{(1 + 2 \varepsilon)d_h}{ 8}  \, \nu_0,    \frac{(1 - 2 \varepsilon)d_h}{ 8 } \right) \subseteq X \cap B(x_0, d_h/4) 
\end{equation}
By combining~\eqref{e:iy} and~\eqref{e:ix} we infer that 
$$
    \omega_N \left( \frac{(1 - 2 \varepsilon)d_h}{ 8 }   \right)^N \leq |X \cap Y^c| \leq |X \triangle Y|
$$
and by the arbitrariness of $h$ this completes the proof of~\eqref{mardi}. 

Estimate~\eqref{mardi2} follows from~\eqref{mardi} by relying on  the following two observations:
\begin{itemize}
\item $X \triangle Y = ( X^c \cap Y ) \cup ( X \cap Y^c) = X^c \triangle Y^c$. 
\item if $X$ is an $(\varepsilon, r_0)$-Reifenberg flat domain, then $X^c$ is also an $(\varepsilon, r_0)$-Reifenberg flat domain. 
\end{itemize}
Hence, by replacing in ~\eqref{mardi} $X$ with $X^c$ and $Y$ with $Y^c$ 
we get~\eqref{mardi2}. 
\end{proof}

\section{Acknowledgements} 

The authors wish to thank Tatiana Toro and Guy David for several conversations on Reifenberg flat domains. E. Milakis was supported by the Marie Curie International Reintegration Grant No 256481 within the 7th European Community Framework Programme. Part of this work was done when L.V. Spinolo was affiliated to the University of Zurich, Switzerland, which she thanks for the kind hospitality. 
\bibliographystyle{plain}
\bibliography{biblio_geom}

\vspace{0.3cm}
\begin{tabular}{l}
Antoine Lemenant\\
Universit\'e Paris Diderot - Paris 7 - LJLL - CNRS \\
U.F.R de Math\'ematiques \\
Site Chevaleret Case 7012\\
75205 Paris Cedex 13 FRANCE\\
{e-mail : \small \tt lemenant@ljll.univ-paris-diderot.fr}
\end{tabular}
\vspace{2em}

\begin{tabular}{l}
Emmanouil Milakis\\ 
University of Cyprus \\ 
Department of Mathematics \& Statistics \\ 
P.O. Box 20537\\
Nicosia, CY- 1678 CYPRUS\\
 {e-mail : \small \tt emilakis@ucy.ac.cy}
\end{tabular}
\vspace{2em}

\begin{tabular}{l}
Laura V. Spinolo\\
IMATI-CNR, \\
via Ferrata 1 \\
I-27100, Pavia, ITALY  \\
{e-mail : \small \tt spinolo@imati.cnr.it}
\hfill
\end{tabular}

\end{document}